\newtheorem{theorem}{Theorem}[section]
\newtheorem{proposition}[theorem]{Proposition}
\newtheorem{lemma}[theorem]{Lemma}
\newtheorem{corollary}[theorem]{Corollary}
\newtheorem{definition}{Definition}[section]
\newtheorem{remark}{Remark}
\title{Iterates of composition operators  on global spaces of ultradifferentiable
functions}
\author{Héctor Ariza, Carmen Fernández and Antonio Galbis}
\begin{document}
\maketitle	

\begin{abstract}
We analyze the behavior of the iterates of composition operators defined by polynomials acting on global classes of ultradifferentiable functions of Beurling type and being invariant under Fourier transform. We characterize the polynomials $\psi$ for which the sequence of iterates is equicontinuous between two different  Gelfand-Shilov spaces. For the particular case in which the weight $\omega$ is equivalent to a power of the logarithm, the result obtained characterizes the polynomials $\psi$ for which the composition operator $C_\psi$ is power bounded in ${\mathcal S}_\omega({\mathbb R}).$ Unlike the composition operators in Schwartz class, the Waelbroek spectrum of an operator $C_\psi$, being $\psi$ a polynomial of degree greater than one lacking fixed points is never compact. We focus on the problem of convergence of Neumann series. We deduce the continuity of the resolvent operator between two different Gelfand-Shilov classes for polynomials $\psi$ lacking fixed points. Concerning polynomials of second degree the most interesting case is the one in which the polynomial only has one fixed point: we provide some restrictions on the indices $d, d'$ that are necessary for the resolvent operator to be continuous between the Gelfand-Shilov classes $\Sigma_d$ and $\Sigma_{d'}.$
\end{abstract}

\section{Introduction}

The spaces ${\mathcal S}_\omega({\mathbb R})$  are versions of the Schwartz class in the ultradifferentiable setting as
they are invariant under the action of the Fourier transform. They were introduced in
\cite{bjorck}. The most relevant
cases correspond to the classical Gelfand-Shilov spaces $\Sigma_d({\mathbb R}).$
The study of these classes and of different operators acting on them is a very
active area of research. See for instance \cite{am,ajm,asensio_jornet,capiello,cprt,debrouwere}.

In this article we continue the research started in \cite{jmaa} about composition operators in Gelfand-Shilov spaces. In general, the Gelfand-Shilov classes are not composition invariant with a polynomial of degree greater than one (see for example \cite[Theorem 3.9]{jmaa}), however for every weight function $\omega$ that is subadditive a new weight $\sigma$ can be found so that $f\circ \psi\in {\mathcal S}_\sigma({\mathbb R})$ whenever $f\in {\mathcal S}_\omega({\mathbb R})$ and $\psi$ is a polynomial (\cite[Theorem 4.4]{jmaa}). Since the choice of $\sigma$ does not depend on the polynomial $\psi$ it makes sense to analyze the dynamics of the operator $C_\psi: {\mathcal S}_\omega({\mathbb R})\to {\mathcal S}_\sigma({\mathbb R}),\ \ f\mapsto f\circ\psi.$ Given a Fréchet space $X$, an operator $T:X\to X$ is said to be power bounded if the sequence of iterates $\left\{T^m:\ m\in {\mathbb N}\right\}$ is equicontinuous or, equivalently, $\left(T^m x\right)_m$ is a bounded sequence for each $x\in X.$ There has been much research regarding power bounded operators in different function spaces and their relationship with other dynamical properties such as the property of being mean ergodic. In Section \ref{sec:power} we characterize the polynomials $\psi$ for which the sequence of iterates is equicontinuous between two different (but related) Gelfand-Shilov spaces. For the particular case in which the weight $\omega$ is equivalent to a power of the logarithm, the result obtained characterizes the polynomials $\psi$ for which the composition operator $C_\psi$ is power bounded in ${\mathcal S}_\omega({\mathbb R}).$ In Section \ref{sec:spectrum} we begin by verifying that, unlike the composition operators in Schwartz class (\cite{advances}), the Waelbroek spectrum of an operator $C_\psi$, being $\psi$ a polynomial of degree greater than one lacking fixed points  is never compact. In this regard, it should be noted that an spectral theory of linear operators not necessarily everywhere defined on non-normable spaces can be found in \cite{ABR}. Then we focus on the problem of convergence of Neumann series, our main results being Proposition \ref{prop:resolvente-continua} and Theorem \ref{th:neumann-gevrey-secondorder}. In Proposition \ref{prop:resolvente-continua} we deduce the continuity of the resolvent operator between two different Gelfand-Shilov classes for polynomials $\psi$ lacking fixed points. Theorem \ref{th:neumann-gevrey-secondorder} focuses on polynomials of second degree having a single fixed point and provides some restrictions on the indices $d, d'$ that are necessary for the resolvent operator to be continuous between the classes $\Sigma_d$ and $\Sigma_{d'}.$ Section \ref{sec:polynomials} contains some estimates involving iterates of a polynomial or derivatives of such iterates that play an essential role in the proof of the main theorems.
\par\medskip
We begin by describing the Gelfand-Shilov function spaces and some of their basic properties. We first give the definition of non-quasianalytic weight function in the sense of Braun, Meise and Taylor \cite{bmt}.
\begin{definition}\label{def:weight} A continuous increasing function $\omega :[0,\infty [\longrightarrow [0,\infty [$ is called a {\it weight} if it satisfies:
	\begin{itemize}
		\item[$(\alpha)$] there exists $K\geq 1$ with $\omega (2t) \leq K(\omega (t)+1)$ for
		all $t\geq 0$,
		\item[$(\beta)$] $\displaystyle\int_{0}^{\infty}\frac{\omega (t)}{1+t^{2}}\ dt < \infty $,
		\item[$(\gamma)$] $\log(1+t^{2})=o(\omega (t))$ as t tends to $\infty$,
		\item[$(\delta)$] $\varphi_\omega :t\rightarrow \omega (e^{t})$ is convex.
	\end{itemize}

\end{definition}
In the main results of the paper we consider weights satisfying a more restrictive condition than ($\alpha$), namely we consider weights that are subadditive ($\omega(a+b)\leq \omega(a) + \omega(b)$) or equivalent to a subadditive one. The main examples are $\omega(t) = t^{\frac{1}{d}}, d>1,$ or $\omega(t) = \max\left(0, \log^p t\right), p > 1.$
\par\medskip
The {\it Young conjugate} $\varphi_\omega ^{*}:[0,\infty [ \longrightarrow {\mathbb R}$  of
$\varphi_\omega$ is defined by
$$
\varphi_\omega^{*}(s):=\sup\{ st-\varphi_\omega (t):\ t\geq 0\},\ s\geq 0.
$$ Then $\varphi_\omega^\ast$ is convex, $\varphi_\omega^\ast(s)/s$ is increasing and $\displaystyle\lim_{s\to \infty}\frac{\varphi_\omega^\ast(s)}{s} = +\infty.$ The weight $\omega$ is said to be a {\it strong weight} if
\begin{itemize}
	\item[$(\varepsilon)$]
	there exists a constant $C \geq 1$ such that for all $y > 0$ the following inequality holds
	\begin{equation}
		\int_1^\infty \frac{\omega(yt)}{t^2}\ dt \leq C\omega(y) + C.
	\end{equation}
\end{itemize}

\begin{definition} Let $\omega$ be a weight function. The Gelfand-Shilov space of Beurling type $\mathcal{S}_{\omega}(\mathbb{R})$ consists of those functions $f\in C^\infty({\mathbb R})$ with the property that
	$$
	\|f\|_{\omega, \lambda}:= \sup_{x\in {\mathbb R}}\sup_{n,q\in {\mathbb N}_0}\left(1+|x|\right)^q|f^{(n)}(x)|\exp\left(-\lambda\varphi_\omega^\ast(\frac{n+q}{\lambda})\right) < \infty$$ for every $\lambda > 0.$
\end{definition}

$\mathcal{S}_{\omega}(\mathbb{R})$ is a nuclear Fréchet space (see \cite{nuclear}). Several equivalent systems of semi-norms describing the topology of $\mathcal{S}_{\omega}(\mathbb{R})$ can be found in \cite{asensio_jornet,seminormas}. In \cite{jmaa} one can find the connection with Gelfand-Shilov spaces defined in terms of a weight sequence instead of a weight function.
\par\medskip
$\omega(t) = t^{\frac{1}{d}}$ is a strong weight for each $d>1.$ The corresponding Gelfand-Shilov space is denoted $\Sigma_d({\mathbb R}) = {\mathcal S}_\omega({\mathbb R}).$
\par\medskip
Condition $(\varepsilon)$ is equivalent to the surjectivity of the Borel map
$$
B:{\mathcal S}_{(\omega)}({\mathbb R})\to {\mathcal E}_{(\omega)}(\{0\}), f\mapsto \left(f^{(j)}(0)\right)_{j\in {\mathbb N}_0},$$ where
$$
{\mathcal E}_{(\omega)}(\{0\}) = \left\{(x_j)_j\in {\mathbb C}^{{\mathbb N}_0}:  \ \sup_j|x_j|\exp(-k\varphi_\omega^\ast(\frac{j}{k})) < \infty\ \forall k > 0\right\}.
$$ The Borel map will be useful in the proof of Theorem \ref{th:neumann-gevrey-secondorder}.
	
\section{Some estimates on polynomials}\label{sec:polynomials}

The aim of this section is to obtain lower bounds for the iterations of a polynomial lacking fixed points (Proposition \ref{prop:iterativepoly}) and upper bounds for the derivatives of the iterates of such a polynomial (Proposition \ref{prop:upper_bound_derivatives}). First we need some auxiliary lemmas.

\begin{lemma}\label{lem:prod-1} For every $\bm k = (k_1,...,k_n)\in \mathbb{N}_0^n$ such that $\sum_{\ell=1}^n \ell k_\ell = n$ one has
	$$\prod_{\ell = 1}^{n}\ell^{\ell k_\ell} \leq \frac{n^n}{n!} \prod_{\ell=1}^n \ell!^{k_\ell}.$$
\end{lemma}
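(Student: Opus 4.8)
The plan is to recast the claimed bound as a purely combinatorial (double-counting) inequality. Dividing both sides by $\prod_{\ell=1}^n (\ell!)^{k_\ell}$ and multiplying by $n!$, the statement is equivalent to
$$\frac{n!}{\prod_{\ell=1}^n (\ell!)^{k_\ell}}\cdot \prod_{\ell=1}^n \ell^{\ell k_\ell}\le n^n.$$
The first factor is a multinomial coefficient: since $\sum_{\ell=1}^n \ell k_\ell = n$, it counts the number of ordered set partitions of $\{1,\dots,n\}$ into $m:=\sum_\ell k_\ell$ blocks whose sizes, listed in a fixed (say nondecreasing) order, are $s_1\le \cdots \le s_m$, where among the $s_i$ the value $\ell$ occurs exactly $k_\ell$ times. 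In particular it is a nonnegative integer and $\prod_{i=1}^m s_i! = \prod_\ell (\ell!)^{k_\ell}$. The right-hand side $n^n$ is the number of functions $\{1,\dots,n\}\to\{1,\dots,n\}$, so it suffices to produce an injection from the objects counted by the left-hand side into the set of such functions.

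First I would give the left-hand side a combinatorial meaning. The product $\prod_\ell \ell^{\ell k_\ell} = \prod_{i=1}^m s_i^{s_i}$ counts the number of ways to attach to an ordered partition $(B_1,\dots,B_m)$ with $|B_i|=s_i$ a family of internal labelings $g_i\colon B_i\to\{1,\dots,s_i\}$, one for each block. Hence the left-hand side of the displayed inequality counts exactly the pairs consisting of an ordered partition $(B_1,\dots,B_m)$ of $\{1,\dots,n\}$ with the prescribed block sizes together with such labelings $(g_i)_{i=1}^m$.

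Next I would encode each such pair as a single function $f\colon\{1,\dots,n\}\to\{1,\dots,n\}$. Writing $t_0=0$ and $t_i=s_1+\cdots+s_i$, so that $t_m=n$, define $f(x)=t_{i-1}+g_i(x)$ for $x\in B_i$. Since $g_i(x)\in\{1,\dots,s_i\}$, the value $f(x)$ lies in the interval $\{t_{i-1}+1,\dots,t_i\}$, and these intervals partition $\{1,\dots,n\}$; thus $f$ is well defined. The key point is injectivity: given $f$, each $x$ falls in a unique interval $(t_{i-1},t_i]$ according to the value $f(x)$, which recovers the block containing $x$ and hence the whole partition, and then $g_i(x)=f(x)-t_{i-1}$ recovers the labelings. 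Therefore distinct pairs yield distinct functions $f$, the injection exists, and the displayed inequality follows, giving the lemma.

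The only delicate point — and the reason the sizes must be arranged in a fixed order — is that the offsets $t_i$ have to be determined \emph{before} $f$ is known, so that the decoding intervals $\{t_{i-1}+1,\dots,t_i\}$ are intrinsic and the reconstruction is unambiguous; this is guaranteed because the multiset of block sizes is read off directly from $\bm k$. An alternative, less transparent route would be to take logarithms and estimate $\sum_\ell k_\ell\log(\ell^\ell/\ell!)$ against $\log(n^n/n!)$ via Stirling's formula, but the double-counting argument avoids all asymptotics and yields the exact inequality directly.
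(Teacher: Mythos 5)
Your proof is correct, and it takes a genuinely different route from the paper's, even though both ultimately rest on the same numerical fact: that the ``diagonal'' multinomial term $\frac{n!}{s_1!\cdots s_m!}\,s_1^{s_1}\cdots s_m^{s_m}$, where $(s_1,\dots,s_m)$ is the multiset of block sizes encoded by $\bm k$, is at most $n^n=(s_1+\cdots+s_m)^n$. You prove this by an explicit double counting: the left-hand side counts pairs consisting of an ordered partition of $\{1,\dots,n\}$ with the prescribed block sizes together with internal labelings $g_i\colon B_i\to\{1,\dots,s_i\}$, and your encoding $f(x)=t_{i-1}+g_i(x)$ injects these pairs into the $n^n$ functions $\{1,\dots,n\}\to\{1,\dots,n\}$; the decoding is unambiguous precisely because the offsets $t_i$ are determined by $\bm k$ alone, a point you rightly flag as the delicate one. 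The paper instead argues algebraically: it extracts a single nonnegative term from the multinomial expansion of $(\alpha_1+\cdots+\alpha_q)^N$ to get inequality (\ref{eq:binomio-1}), and then applies it twice, first within each group of blocks of equal size $\ell$ (inequality (\ref{eq:binomio-2})) and then across the groups (inequality (\ref{eq:binomio-3})). Your argument buys a one-step, self-contained proof with a transparent counting interpretation, and it incidentally shows that the paper's intermediate grouping step is dispensable, since a single application of (\ref{eq:binomio-1}) with $q=m$ and $\alpha_i=s_i$ already gives the lemma; the paper's argument buys brevity, since the multinomial theorem is quoted rather than re-derived by a bijective encoding.
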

\begin{proof}
Without loss of generality we can assume $k_\ell \geq 1$ for all $1\leq \ell\leq n.$ According to the binomial theorem, for every $\alpha, \beta\in {\mathbb N}_0^q$ such that $|\alpha| = |\beta| = N$ we have
$$
N^N = \left(\alpha_1 + \ldots + \alpha_q\right)^N \geq \frac{N!}{\beta_1!\ldots \beta_q!}\alpha_1^{\beta_1}\ldots \alpha_q^{\beta_q}.$$ After choosing $\beta = \alpha$ we obtain
\begin{equation}\label{eq:binomio-1}
\prod_{p=1}^q \frac{\alpha_p^{\alpha_p}}{\alpha_p!} \leq \frac{N^N}{N!}.\end{equation} In particular, for $k_\ell \geq 1$ and $\alpha_p = \ell,\ 1\leq p\leq k_\ell,$
\begin{equation}\label{eq:binomio-2}
\left(\frac{\ell^\ell}{\ell!}\right)^{k_\ell} \leq \frac{(\ell k_\ell)^{\ell k_\ell}}{(\ell k_\ell)!}.\end{equation} Finally, after taking $\alpha_\ell = \ell k_\ell$ and $q = n$ in (\ref{eq:binomio-1}) we conclude
\begin{equation}\label{eq:binomio-3}
\prod_{\ell=1}^n \frac{(\ell k_\ell)^{\ell k_\ell}}{(\ell k_\ell)!} \leq \frac{n^n}{n!}.\end{equation} The conclusion follows from (\ref{eq:binomio-2}) and (\ref{eq:binomio-3}).
\end{proof}

\begin{lemma}\label{lem:combinatorics1} For every $n\in\mathbb{N}$ the following holds:
	\begin{equation}\label{suminversecombinatorics}
		\sum_{k=0}^{n} \binom{n}{k}^{-1} =\frac{n+1}{2^n}\sum_{k=0}^n \frac{2^k}{k+1} \leq 3.
	\end{equation}
\end{lemma}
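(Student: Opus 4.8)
The plan is to separate the statement into the closed-form identity and the bound $\le 3$, and to prove the identity through a recurrence for $b_n:=\sum_{k=0}^{n}\binom{n}{k}^{-1}$.

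For the identity, I would first establish the pointwise relation
\[
\binom{n}{k}^{-1}=\frac{n+1}{n+2}\left(\binom{n+1}{k}^{-1}+\binom{n+1}{k+1}^{-1}\right).
\]
Writing each reciprocal as $\binom{n}{k}^{-1}=\frac{k!(n-k)!}{n!}$ and factoring $\tfrac{k!(n-k)!}{(n+1)!}$ out of the right-hand side, the bracket becomes $(n+1-k)+(k+1)=n+2$, which cancels the denominator $n+2$ and reproduces the left-hand side. Summing this relation over $0\le k\le n$ and reindexing the second sum, both sums equal $b_{n+1}-1$, so $b_n=\frac{2(n+1)}{n+2}(b_{n+1}-1)$, i.e.
\[
b_n=\frac{n+1}{2n}\,b_{n-1}+1\quad(n\ge 1),\qquad b_0=1.
\]
Then I would check that $c_n:=\frac{n+1}{2^n}\sum_{k=0}^{n}\frac{2^k}{k+1}$ satisfies the same recurrence and initial value: isolating the top term $\frac{n+1}{2^n}\cdot\frac{2^n}{n+1}=1$ gives $c_n=1+\frac{n+1}{2^n}\sum_{k=0}^{n-1}\frac{2^k}{k+1}=1+\frac{n+1}{2n}\,c_{n-1}$, with $c_0=1$. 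By induction $b_n=c_n$, which is the first equality.

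For the bound, a naive global induction on the recurrence is too lossy (it only yields $\tfrac{13}{4}$), so instead I would estimate the terms directly. For $n\ge 3$ the four extreme indices $k\in\{0,1,n-1,n\}$ are distinct and contribute $2+\tfrac{2}{n}$; by unimodality of the binomial coefficients one has $\binom{n}{k}\ge\binom{n}{2}=\tfrac{n(n-1)}{2}$ for $2\le k\le n-2$, so the remaining $n-3$ terms sum to at most $\frac{2(n-3)}{n(n-1)}$. Hence
\[
b_n\le 2+\frac{2}{n}+\frac{2(n-3)}{n(n-1)},
\]
and the claim $b_n\le 3$ reduces, after multiplying by $n(n-1)>0$, to $4n-8\le n^2-n$, i.e. $n^2-5n+8\ge 0$, which holds for every $n$ since the discriminant $25-32$ is negative. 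The small cases $n=0,1,2$ (values $1,2,\tfrac52$) are verified by hand.

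The main obstacle is discovering the correct pointwise identity that collapses $b_n$ into a first-order recurrence matching the prescribed closed form; once that is in place the rest is bookkeeping. The only subtlety in the inequality is recognizing that the recurrence must not be iterated directly, and that separating the extreme terms before estimating the central ones is what brings the bound safely below $3$.
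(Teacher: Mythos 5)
Your proof is correct, but it takes a genuinely different route from the paper's on both halves of the statement. For the identity, the paper writes $\binom{n}{k}^{-1}=(n+1)B(k+1,n-k+1)$ and evaluates the sum by collapsing a geometric series inside the Beta integral; you instead prove the elementary pointwise identity $\binom{n}{k}^{-1}=\frac{n+1}{n+2}\bigl(\binom{n+1}{k}^{-1}+\binom{n+1}{k+1}^{-1}\bigr)$, sum it to obtain the first-order recurrence $S_n=1+\frac{n+1}{2n}S_{n-1}$ with $S_0=1$, and check that the closed form satisfies the same recurrence --- entirely integral-free. Interestingly, the paper travels the same recurrence in the opposite direction: it derives $S_n=1+\frac{n+1}{2n}S_{n-1}$ \emph{from} the already-established closed form and uses it only for the inequality, whereas for you the recurrence is the engine that proves the closed form. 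For the bound, the paper computes $S_n\le 3$ for $n\le 6$ by hand and then inducts for $n\ge 7$, where
\[
S_n=1+\tfrac{n+1}{2n}S_{n-1}\le 1+\bigl(\tfrac12+\tfrac1{2n}\bigr)\cdot 3=\tfrac52+\tfrac3{2n}<3;
\]
you avoid induction altogether by splitting off the four extreme terms and bounding the $n-3$ central ones by $\binom{n}{2}^{-1}$ via unimodality, reducing the claim to the negativity of the discriminant of $n^2-5n+8$. Both arguments are sound; your version is more self-contained (no case-by-case computation of $S_n$ up to $n=6$), and your identity proof is more elementary than the Beta-function computation. One small inaccuracy worth flagging: your parenthetical claim that induction on the recurrence is ``too lossy'' is not right as stated --- only the crudest version fails, namely using the uniform bound $\frac{n+1}{2n}\le\frac34$ valid from $n=2$ on, which indeed gives only $1+\frac94=\frac{13}{4}$; the paper's refined induction, which starts the inductive step at an $n$ large enough that $\frac{n+1}{2n}$ has dropped well below $\frac23$, goes through without difficulty.
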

\begin{proof}
	We denote $\displaystyle S_n:=\sum_{k=0}^{n} \binom{n}{k}^{-1}.$ From the relation between Gamma and Beta functions we have
		$$
		\begin{array}{*2{>{\displaystyle}l}}
			S_n & =(n+1)\sum_{k=0}^n B(k+1,n-k+1) = (n+1)\int_0^1 (1-t)^n \sum_{k=0}^n \left(\frac{t}{1-t}\right)^k dt \\ & \\ & =(n+1)\int_0^1 \frac{t^{n+1}-(1-t)^{n+1}}{2t-1}dt = \frac{n+1}{2^{n+1}}\int_{0}^1 \frac{(s+1)^{n+1}-(1-s)^{n+1}}{s}ds \\ & \\ & =\frac{n+1}{2^{n+1}} \left(\int_{0}^1 \sum_{k=0}^n (1+s)^k ds + \int_{0}^1 \sum_{k=0}^n (1-s)^k ds\right)\\ & \\ & = \frac{n+1}{2^{n+1}} \sum_{k=0}^n \left(\frac{2^{k+1}-1}{k+1}+\frac{1}{k+1}\right).
		\end{array}$$ Hence
		$$
		S_n  = \frac{n+1}{2^{n}} \sum_{k=0}^n \frac{2^{k}}{k+1}
		$$ for all $n\in\mathbb{N}$, as we wanted. Now,
		$$
		S_n = 1+\frac{n+1}{2n} S_{n-1}
		$$ for all $n\geq 2.$ By an easy computation we get $S_n\leq 3$ whenever $n\leq 6.$ Finally, assuming that $S_{n-1}\leq 3$ for some $n\geq 7$ we conclude
		$$
		S_n = 1+\frac{n+1}{2n} S_{n-1} < 1 + \left(\frac{1}{2} + \frac{1}{2n}\right)3 < 3.$$
	\end{proof}

Given two natural numbers $n$ and $k$ we consider the set
$$
H_{n,k}:=\left\{\bm k = (k_1,...,k_n)\in \mathbb{N}_0^n:\ \sum_{\ell = 1}^n k_\ell = k,\ \ \sum_{\ell=1}^n \ell k_\ell = n\right\}.$$ Observe that $H_{n,k} = \emptyset$ whenever $n < k.$ 
	
\begin{lemma}\label{lem:combinatorics2}
For every $n\geq k$ we have
\begin{equation}\label{eq:combinatoricsestimation}
\sum_{\bm k\in H_{n,k}}\ \prod_{\ell = 1}^n \ell!^{k_\ell} \leq n!\end{equation}
\end{lemma}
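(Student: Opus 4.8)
The plan is to reinterpret the left-hand side as a sum over integer partitions, compare it with the analogous sum over \emph{compositions} (ordered parts), where each summand becomes an inverse multinomial coefficient, and then control the composition sum by induction on the number of parts, with Lemma \ref{lem:combinatorics1} supplying exactly the estimate needed at the inductive step.

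First I would note that each $\bm k\in H_{n,k}$ encodes an integer partition of $n$ into exactly $k$ parts, with $k_\ell$ parts equal to $\ell$, so that $\prod_{\ell=1}^n \ell!^{k_\ell}=\prod_{i=1}^k \lambda_i!$, where $\lambda_1,\dots,\lambda_k$ is the associated multiset of parts. For any composition $(\lambda_1,\dots,\lambda_k)$ of $n$ into $k$ positive parts one has the identity $\prod_{i=1}^k \lambda_i! = n!\binom{n}{\lambda_1,\dots,\lambda_k}^{-1}$. Since each partition type arises from $k!/\prod_\ell k_\ell!\ge 1$ compositions all carrying the same weight, passing from partitions to compositions only adds nonnegative terms, whence
\[
\sum_{\bm k\in H_{n,k}} \prod_{\ell=1}^n \ell!^{k_\ell}\ \le\ \sum_{\substack{\lambda_1+\cdots+\lambda_k=n\\ \lambda_i\ge 1}} \prod_{i=1}^k \lambda_i!\ =\ n!\,C(n,k),\qquad C(n,k):=\sum_{\substack{\lambda_1+\cdots+\lambda_k=n\\ \lambda_i\ge 1}}\binom{n}{\lambda_1,\dots,\lambda_k}^{-1}.
\]
It therefore suffices to prove $C(n,k)\le 1$ for all $1\le k\le n$.

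I would establish $C(n,k)\le 1$ by induction on $k$. For $k=1$ the only composition is $(n)$, giving $C(n,1)=1$. Splitting off the last part $\lambda_k=j$ and using the multiplicativity $\binom{n}{\lambda_1,\dots,\lambda_k}=\binom{n}{j}\binom{n-j}{\lambda_1,\dots,\lambda_{k-1}}$ yields the recursion
\[
C(n,k)=\sum_{j=1}^{n-k+1}\binom{n}{j}^{-1} C(n-j,k-1).
\]
Assuming $C(m,k-1)\le 1$ for all admissible $m$, this gives $C(n,k)\le\sum_{j=1}^{n-k+1}\binom{n}{j}^{-1}$, and since $k\ge 2$ forces the summation range into $\{1,\dots,n-1\}$, I can bound it by $\sum_{j=1}^{n-1}\binom{n}{j}^{-1}=\bigl(\sum_{j=0}^{n}\binom{n}{j}^{-1}\bigr)-2\le 3-2=1$, the last inequality being exactly Lemma \ref{lem:combinatorics1}. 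Combined with the first step this proves \eqref{eq:combinatoricsestimation}. The only genuinely delicate points are the reduction from the unordered sum over $H_{n,k}$ to the ordered composition sum, and the observation that for $k\ge 2$ the truncated range omits precisely the two extreme terms $j=0$ and $j=n$ of $\sum_{j=0}^n\binom{n}{j}^{-1}$; this is what converts the bound $\le 3$ of Lemma \ref{lem:combinatorics1} into the sharp estimate $C(n,k)\le 1$, and hence into the sharp constant $n!$ (attained already at $k=1$).
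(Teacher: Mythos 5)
Your proof is correct. It runs on the same engine as the paper's argument --- induction on the number of parts $k$, peeling off a single part of size $j$, and invoking Lemma \ref{lem:combinatorics1} through the observation that $\sum_{j=1}^{n-1}\binom{n}{j}^{-1} = \bigl(\sum_{j=0}^{n}\binom{n}{j}^{-1}\bigr)-2 \leq 1$ --- but you organize the decomposition differently, and the difference is instructive. The paper works directly with the unordered data $\bm k\in H_{n,k}$: it splits $H_{n,k}$ according to the first nonzero coordinate $j$, which requires a separate Claim (that $k_\ell=0$ for all $\ell>n-j$ when $\bm k\in I_j$) just to make the passage from $I_j$ to $H_{n-j,k-1}$ well defined, and the resulting comparison is an inequality coming from an injection $I_j\hookrightarrow H_{n-j,k-1}$. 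You instead overcount at the outset by passing from partitions to ordered compositions, using $\prod_i\lambda_i!=n!\,\binom{n}{\lambda_1,\dots,\lambda_k}^{-1}$; the price (each partition is repeated $k!/\prod_\ell k_\ell!\geq 1$ times) is harmless since all terms are positive, and the reward is that your recursion $C(n,k)=\sum_{j}\binom{n}{j}^{-1}C(n-j,k-1)$, obtained by splitting off the last part, is an exact identity requiring no combinatorial lemma about vanishing coordinates. So your route trades the paper's delicate set-theoretic bookkeeping for a clean multinomial identity plus a trivial overcounting step; both proofs concede sharpness at exactly the same point, namely the bound $\leq 1$ on the truncated inverse-binomial sum, and both have equality in the base case $k=1$.
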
	
\begin{proof} We first observe that for $n = k$ the set $H_{n,n}$ consists of the only element $\bm k = \left(n,0,\ldots,0\right).$ That is, $k_1 = n$ while $k_\ell = 0$ for every $\ell\geq 2.$  Therefore, in this case the inequality established by the lemma is obvious, so we will assume from now on that $n > k.$ We will proceed by induction on $k.$
\par\medskip
We start with the case $k=1$ by noticing that $H_{n,1}$ consists of the only element $\bm k=\left(0,...,0,1\right)\in {\mathbb N}_0^n$ for all $n > 1.$ Hence in this case
$$
\sum_{\bm k\in H_{n,k}}\ \prod_{\ell = 1}^n \ell!^{k_\ell} = n! $$
\par\medskip
We now fix $n > k\geq 2$ and assume that inequality (\ref{eq:combinatoricsestimation}) holds for $H_{q,k-1}$ whenever $q \geq k-1.$ We note that the first non-zero coordinate of $\bm k\in H_{n,k}$ cannot be the $n$th coordinate, since in that case we would have $\bm k = (0,\ldots,0,1)$ and therefore $k = 1,$ which contradicts the condition $k\geq 2.$ Hence we can decompose
$$
H_{n,k} = \bigcup_{j=1}^{n-1} I_j,$$ where $I_j$ denotes the subset of $H_{n.k}$ consisting of those $\bm k$ for which the first non-zero coordinate is precisely the $j$th coordinate. Obviously, the sets $I_1,...,I_{n-1}$ are pairwise disjoint. Some of the sets $I_j$ could be empty, as follows from the discussion below.
\par\medskip
{\it Claim:} Suppose $I_j\neq \emptyset.$ Then $k_\ell = 0$ for every $\ell > n-j$ whenever $\bm k = \left(k_1,\ldots,k_n\right)\in I_j.$
\par
We first consider the case that $n-j > j.$ For every $\ell > n-j$ we have 
$$
j(k_j-1) + \ell k_\ell = j k_j + \ell k_\ell -j \leq n-j,$$ which implies $k_\ell = 0.$ 
\par
If $n-j\leq j$ then, for every $\ell > j$ we have 
$$
 j(k_j-1) + \ell k_\ell = j k_j + \ell k_\ell -j \leq n-j \leq j,$$ which implies $k_\ell = 0.$ Hence $k_\ell\neq 0$ if, and only if, $\ell = j.$ Then $k_j = k\geq 2$ and $2j\leq jk_j\leq n\leq 2j.$ Hence $n=2j$ and $k_\ell = 0$ for $\ell > n-j = j.$ The Claim is proved.
 \par

 Observe that in particular we proved that $n$ is even whenever $I_j\neq \emptyset$ for some $j$ satisfying $n-j\leq j.$
  \par\medskip
  To each element $\bm k\in I_j$ ($1\leq j\leq n-1$) we associate $\tilde{\bm k} \in H_{n-j,k-1}$ obtained from $\bm k$ by removing all coordinates after the coordinate $n-j$ (which are null according to the Claim) and replacing $k_j$ (non-zero) with $k_j - 1.$ In fact, $$\displaystyle\sum_{\ell=1}^{n-j} \tilde{k}_\ell = \sum_{\ell=1}^n k_\ell -1 = k-1\ \  \mbox{while}\ \  \sum_{\ell=1}^{n-j}\ell \tilde{k}_\ell = \sum_{\ell=1}^n \ell k_\ell -j = n-j.$$ Observe that
  $$
  \prod_{\ell=1}^n \ell!^{k_\ell} = j! \prod_{\ell=1}^{n-j}\ell!^{\tilde{k}_\ell}.$$ Finally, from the induction hypothesis we obtain
  $$
  \begin{array}{*2{>{\displaystyle}l}}
 \sum_{\bm k\in H_{n,k}}\ \prod_{\ell = 1}^n \ell!^{k_\ell} & = \sum_{j=1}^{n-1}\sum_{\bm k\in I_j}\ \prod_{\ell = 1}^n \ell!^{k_\ell} \leq \sum_{j=1}^{n-1} j! \sum_{\tilde{\bm k}\in H_{n-j,k-1}}\ \prod_{\ell = 1}^{n-j} \ell!^{\tilde{k}_\ell}\\ & \\ & \leq \sum_{j=1}^{n-1} j! (n-j)! = n!\sum_{j=1}^{n-1}\binom{n}{j}^{-1} \leq n!	
\end{array}$$ The last inequality follows from Lemma \ref{lem:combinatorics1}. 	
\end{proof}

We recall that two polynomials $\psi, \phi$ are said to be linearly equivalent if there exists $\ell(x)=\alpha x+\beta$ for all $x\in\mathbb{R}$, $\alpha\not = 0,$ such that $\phi(x)=(\ell \circ \psi\circ\ell^{-1})(x)$ for all $x\in\mathbb{R}$. Every polynomial of even degree is linearly equivalent to a monic polynomial: if $\psi(x) = a x^{2p} + r(x)$ with $a\neq 0$ and $r(x)$ a polynomial of degree less than $2p$ then we choose $b$ satisfying $b^{2p-1} = a.$ Then, for $\ell(x) = bx,$ the polynomial $\ell \circ \psi\circ\ell^{-1}$ is monic.
\par\medskip
Below we obtain lower bounds for the iterations of a polynomial lacking fixed points.

\begin{proposition}\label{prop:iterativepoly}
	Let $\psi$ be a polynomial of degree $2p$ with $p\geq 1$, without fixed points. Then, for each $b>1$ there is $m_0\in\mathbb{N}$ such that \begin{equation*}
		|\psi_{m_0+k}(x)|\geq b^{2^k}
	\end{equation*} for all $x\in\mathbb{R}$ and for all $k\in\mathbb{N}$.
\end{proposition}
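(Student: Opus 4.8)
The plan is to exploit the two defining features of $\psi$: it has no fixed points, so $\psi(x)-x$ never vanishes, and it has even degree $2p$ with positive leading coefficient (after replacing $\psi$ by a linearly equivalent monic polynomial using the discussion preceding the statement — note that equicontinuity-type lower bounds are preserved under conjugation by an affine map, so no generality is lost). Since $\psi(x)-x$ is a continuous function of even degree that never vanishes, it is bounded below by a positive constant, hence $\psi(x) > x + \delta$ for some $\delta > 0$ and all $x \in \mathbb{R}$; in particular $\psi(x) > x$ everywhere. This strict increase forces the orbit of any point to march off to $+\infty$.

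Next I would convert this into genuine growth. Because $\deg \psi = 2p \geq 2$ and the leading coefficient is positive, for large arguments $\psi$ dominates a quadratic: there exist constants $c > 0$ and $x_0$ such that $\psi(x) \geq c\,x^2$ for all $x \geq x_0$, and in fact one can arrange $\psi(x) \geq x^2$ for $x$ beyond some threshold. The strategy is then a two-phase argument. First, using $\psi(x) > x + \delta$, the iterates $\psi_m(x)$ increase without bound uniformly over $x \in \mathbb{R}$ (the lower bound is uniform because $\psi - \mathrm{id}$ is bounded below globally), so there is an $m_1$ with $\psi_{m_1}(x) \geq x_1$ for all $x$, where $x_1$ is large enough to enter the quadratic-domination regime. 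Once in that regime, the recursion $\psi_{m+1}(x) = \psi(\psi_m(x)) \geq \psi_m(x)^2$ gives doubly exponential growth: if $\psi_{m_1}(x) \geq b^2$ then $\psi_{m_1+k}(x) \geq b^{2^k}$ by an immediate induction on $k$.

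Concretely, given $b > 1$, I would choose the threshold so that the quadratic inequality $\psi(t) \geq t^2$ holds for $t \geq b^2$ (possible since $b > 1$ and $\psi$ eventually dominates $t^2$), then pick $m_0$ so that $\psi_{m_0}(x) \geq b^2$ for all $x \in \mathbb{R}$, which the first phase guarantees. The induction is the clean part: $|\psi_{m_0+1}(x)| = \psi(\psi_{m_0}(x)) \geq \psi_{m_0}(x)^2 \geq (b^2)^2 = b^{2^1}$, and assuming $\psi_{m_0+k}(x) \geq b^{2^k}$ one gets $\psi_{m_0+k+1}(x) \geq (b^{2^k})^2 = b^{2^{k+1}}$, using that $b^{2^k} \geq b^2$ keeps us inside the quadratic regime. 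The absolute value in the statement is harmless since all these iterates are eventually large and positive.

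The main obstacle is making the first phase quantitative and \emph{uniform in $x$}: I need that the orbits reach the quadratic regime after a number of steps $m_0$ that does not depend on the starting point $x$. This is where the no-fixed-point hypothesis does real work. The global lower bound $\psi(x) - x \geq \delta > 0$ handles points that start small or negative (they advance by at least $\delta$ each step until they reach $x_1$, in at most $\lceil (x_1 - \inf_x x)/\delta\rceil$ steps — but since the domain is unbounded below this naive count fails), so the cleaner route is to observe that for $x$ very negative the even-degree polynomial $\psi$ itself already takes large positive values, so one pass of $\psi$ sends the far-left and far-right regions into $[x_2, \infty)$, while the bounded middle region is handled by the uniform advance $\psi - \mathrm{id} \geq \delta$. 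Combining a compactness argument on the bounded middle with the growth at $\pm\infty$ yields a single $m_0$ working for all $x$ simultaneously.
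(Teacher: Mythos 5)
Your proposal is correct and follows essentially the same route as the paper's proof: reduction to a monic representative, the uniform advance $\psi(x)\geq x+\delta$ coming from the absence of fixed points, a uniform number of steps to enter the regime where $\psi(t)\geq t^2$ (the paper handles the unbounded-below issue by noting that one application of $\phi$ lands above its global minimum, which is the same observation as your far-regions-plus-bounded-middle split), and then the doubling induction. The only place you are looser than the paper is the final transfer back through the affine conjugation $\psi=\ell\circ\phi\circ\ell^{-1}$, which the paper makes explicit by choosing $B$ large enough (depending on $b$ and the coefficients of $\ell$) so that $|e|B^{2^k}-|d|\geq b^{2^k}$ for all $k$; your remark that the monic case is proved for every base $b>1$ is exactly what makes this bookkeeping routine.
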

\begin{proof}
	In the case of $p=1$, $\psi$ is linearly equivalent to $\phi(x)=x^2+c$ with $c>\frac{1}{4}$ (see \cite[Section 3]{advances}), while $\psi$ is linearly equivalent to $\phi(x)=x^{2p}+r(x)$ with $r$ being a polynomial of degree less than $2p$ for $p\geq 2$. Indeed, we write $\psi(x)=a_{2p}x^{2p}+a_{2p-1}x^{2p-1}+...+a_1 x+a_0$ with $a_{2p}\not =0$ and observe that if $\ell^{-1}(x)=  a_{2p}^{\frac{1}{2p-1}} x$ we have that \begin{equation*}
		(\ell^{-1}\circ \psi \circ \ell)(x)=a_{2p}^{\frac{1}{2p-1}} \psi(\frac{x}{a_{2p}^{\frac{1}{2p-1}}})=x^{2p}+r(x)
	\end{equation*} with $r$ being a polynomial of degree less than $2p$, as it was required. In both cases, there is $B>1$ (to be chosen later) so that $\phi(x)\geq x^2$ for all $|x|\geq B$. Hence, $\phi_m(x)\geq x^{2^m}\geq B^{2^m}$ for all $|x|\geq B$. Since $\phi(x)>x$ for all $x\in\mathbb{R}$ and $\lim_{|x|\to \infty} (\phi(x)-x)=+\infty,$ there is $a>0$ such that $\phi(x)>x+a$ for all $x\in\mathbb{R}$ and therefore, $\phi_m(x)>x+ma$ for all $x\in\mathbb{R}$ and $m\in\mathbb{N}$. In particular, $\phi_m(x)>\min\{\phi(x): x\in\mathbb{R}\}+(m-1)a$ for all $m\geq 2$ and $x\in\mathbb{R}$. Choose $m_0\geq 2$ so that $\phi_{m_0}(x)\geq B$ for all $x\in\mathbb{R}$. By induction on $k$ we deduce that \begin{equation*}
		\phi_{m_0+k}(x)\geq B^{2^k}
	\end{equation*} for all $x\in\mathbb{R}$, $k\in\mathbb{N}$. Now, take $\ell(x)=ex+d$ so that $\psi=\ell\circ \phi\circ \ell^{-1}$. Fix $b>0$. After choosing $B>\max\{b, \frac{|d|+1}{|e|}b\}$ we get that \begin{equation*}
		(\frac{B}{b})^{2^k}\hspace{0.05cm}|e|-1>|d|
	\end{equation*} for all $k\in\mathbb{N}_0$. Then, \begin{equation*}
		|\psi_{m_0+k}(x)|=|\ell(\phi_{m_0+k}(\ell^{-1}(x)))|\geq |e|B^{2^k}-|d|\geq b^{2^k}
	\end{equation*} for all $x\in\mathbb{R},$ $k\in\mathbb{N}$.
\end{proof}
\par\medskip It follows from the previous proof that every polynomial without fixed points is linearly equivalent to a monic polynomial without fixed points.
Now we obtain an upper bound for the derivatives of the successive iterations of a polynomial without fixed points.

\begin{proposition}\label{prop:upper_bound_derivatives}
Let $\psi$ be a polynomial of degree greater than $1$ without fixed points. For every $\alpha > 1$ there exist $C > 0$ and $r > 1$ such that
$$
|\psi^{(n)}_m(x)|\leq C r^n n!^2 \left(1 + |\psi_m(x)|\right)^\alpha$$ for all $x\in \mathbb{R}$, $n\in\mathbb{N},m\in\mathbb{N}$.
\end{proposition}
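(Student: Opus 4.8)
The plan is to fix the target $\alpha>1$ and argue by induction on $m$ via Faà di Bruno's formula, using the combinatorial bound of Lemma~\ref{lem:combinatorics2} to control the purely numerical part and the lower bound of Proposition~\ref{prop:iterativepoly} to control the accumulation of constants.

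First I would reduce to a normalized situation. A polynomial of degree $>1$ without fixed points must have even degree, since $\psi(x)-x$ is sign-definite and an odd-degree polynomial always changes sign; by the remark following Proposition~\ref{prop:iterativepoly} such a $\psi$ is linearly equivalent to a monic $\phi$ without fixed points, and then necessarily $\phi(x)>x$ for all $x$. Because iteration commutes with conjugation, $\psi_m=\ell\circ\phi_m\circ\ell^{-1}$ for an affine $\ell(t)=at+b$, so $\psi_m^{(n)}(x)=a^{1-n}\phi_m^{(n)}(\ell^{-1}(x))$ and $1+|\psi_m(x)|$ is comparable to $1+|\phi_m(\ell^{-1}(x))|$. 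Thus the affine Jacobian $a^{1-n}$ folds into $r^n$ and the comparison of the two quantities $1+|\cdot|$ folds into $C$, so it suffices to prove the estimate for $\phi$; from now on I assume $\psi$ is monic of degree $D=2p$ with $\psi(x)>x$.

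For the inductive step I would write $\psi_m=\psi\circ\psi_{m-1}$, set $y=\psi_{m-1}(x)$ so that $\psi_m(x)=\psi(y)$, and expand by Faà di Bruno, grouping terms by $k=\sum_\ell k_\ell$:
$$\psi_m^{(n)}(x)=\sum_{k=1}^{n}\psi^{(k)}(y)\sum_{\bm k\in H_{n,k}}\frac{n!}{\prod_\ell k_\ell!\,(\ell!)^{k_\ell}}\prod_{\ell=1}^{n}\bigl(\psi_{m-1}^{(\ell)}(x)\bigr)^{k_\ell}.$$
I would carry the induction hypothesis $|\psi_{m-1}^{(\ell)}(x)|\le \rho^\ell\,\ell!^{2}\,(1+|\psi_{m-1}(x)|)^{\beta_{m-1}}$ with a \emph{fixed} base $\rho$ but a \emph{variable} exponent $\beta_{m-1}$. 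Substituting, the factor $\bigl(\prod_\ell \ell!^{k_\ell}\bigr)^2$ combines with the $\prod_\ell(\ell!)^{-k_\ell}$ to leave $\prod_\ell \ell!^{k_\ell}$, so Lemma~\ref{lem:combinatorics2} (together with $\prod_\ell k_\ell!\ge 1$) collapses the inner sum to at most $n!^2$, while $\sum_\ell \ell k_\ell=n$ turns the geometric weights into a single $\rho^n$. The decisive simplification is that $\psi^{(k)}\equiv 0$ for $k>D$, so the outer sum runs only over $1\le k\le D$, giving
$$|\psi_m^{(n)}(x)|\le \rho^n\,n!^2\sum_{k=1}^{D}|\psi^{(k)}(y)|\,(1+|y|)^{\beta_{m-1}k}.$$

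The main obstacle is now visible and is the heart of the argument. Since $\psi$ is monic of degree $D$, for $|y|\ge Y_0$ the sum on the right is dominated by its top term and is bounded by $L\,(1+|\psi(y)|)^{\beta_{m-1}}$, where $L$ (essentially $D!$) is a constant uniform in $\beta_{m-1}\in[1,\alpha]$, using $1+|\psi(y)|\ge \tfrac12(1+|y|)^{D}$ for large $y$. As $L>1$, a naive induction with a fixed exponent would multiply the constant by $L$ at each of the $m$ steps, which is fatal. The remedy is to absorb $L$ into the exponent rather than the base: I would set $\beta_m=\beta_{m-1}+\delta_m$ and choose $\delta_m$ so that $(1+\inf_x|\psi_m(x)|)^{\delta_m}\ge L$, which converts the step into $|\psi_m^{(n)}(x)|\le \rho^n n!^2 (1+|\psi_m(x)|)^{\beta_m}$ with $\rho$ \emph{unchanged}. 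By Proposition~\ref{prop:iterativepoly} one has $\inf_x|\psi_m(x)|\ge b^{2^{m-m_0}}$, so $\delta_m$ is of order $2^{-(m-m_0)}\log L/\log b$ and hence summable; choosing the starting index $m_*$ large enough that $|\psi_{m_*}|\ge Y_0$ everywhere and that $\sum_{m>m_*}\delta_m\le \alpha-\beta_{m_*}$ for some $\beta_{m_*}\in(1,\alpha)$ forces $\beta_m\uparrow\beta_\infty\le\alpha$ for all $m\ge m_*$. Finally, the base case $m=m_*$ and the finitely many indices $m<m_*$ each concern a single fixed polynomial, whose finitely many nonzero derivatives satisfy $\deg \psi_m^{(n)}=D^m-n<\alpha D^m$ and are therefore dominated by $(1+|\psi_m(x)|)^{\alpha}$ up to a constant; enlarging $\rho$ and $C$ to cover these finitely many cases yields the stated bound for all $m,n$.
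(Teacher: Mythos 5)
Your proposal is correct, but the mechanism you use to control the accumulation of constants across iterations is genuinely different from the paper's. Both proofs share the same skeleton: reduction to a monic polynomial, induction on $m$ via Fa\`a di Bruno's formula with terms grouped by $k=\sum_\ell k_\ell$, and Lemma~\ref{lem:combinatorics2} to collapse the combinatorial sum (you bypass Lemma~\ref{lem:prod-1} by building $\ell!^2$ directly into the induction hypothesis, whereas the paper works with $n!\,n^n$ and converts to $n!^2$ by Stirling at the end). The divergence is in the key step. The paper keeps the exponent fixed at $\alpha$ for all $m$ and instead pre-absorbs the multiplicative loss: it chooses $c<\bigl((2p)!\bigr)^{-1/(2p-1)}$ and proves the weighted ``master inequality'' $\sum_{k=1}^{2p}c^{k-1}(1+|x|)^{\alpha k}|\psi^{(k)}(x)|\le (1+|\psi(x)|)^{\alpha}$ for $x\ge x_0$, which holds with constant exactly $1$ because the top coefficient $(2p)!\,c^{2p-1}$ is below $1$ and $\alpha>1$ kills the lower-order terms; the induction hypothesis carries the prefactor $c$, so $c^k=c\cdot c^{k-1}$ regenerates the weights exactly and nothing blows up. Consequently the paper uses Proposition~\ref{prop:iterativepoly} only qualitatively (to get $\psi_m\ge x_0$ uniformly for $m\ge m_0$). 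You instead keep all constants static and let the exponent creep, $\beta_m=\beta_{m-1}+\delta_m\uparrow\le\alpha$, absorbing the fixed loss $L$ into $(1+|\psi_m|)^{\delta_m}$; this genuinely needs the quantitative doubly-exponential bound $\inf_x|\psi_m(x)|\ge b^{2^{m-m_0}}$ of Proposition~\ref{prop:iterativepoly} so that $\sum_m\delta_m<\infty$. In exchange you avoid the delicate algebraic choice of $c$ (and Lemma~\ref{lem:prod-1}) entirely; the trade-off is that the paper's argument would survive with much weaker information on the iterates (uniform escape past a threshold suffices), while yours requires divergence fast enough that $\sum_m 1/\log\bigl(\inf_x|\psi_m(x)|\bigr)$ converges, which the proposition happily supplies.
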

\begin{proof}
Without loss of generality we can assume that $\psi$ is a monic polynomial of even degree $2p.$ Choose $0 < c < \left(\frac{1}{(2p)!}\right)^{\frac{1}{2p-1}}.$ There exists $x_0 > 0$ such that
\begin{equation}\label{eq:upper-bound}
\sum_{k=1}^{2p} c^{k-1}\left(1+|x|\right)^{\alpha k}\cdot|\psi^{(k)}(x)| \leq \left(1 + \left|\psi(x)\right|\right)^\alpha\ \ \forall\ x\geq x_0.\end{equation} In fact, since $\alpha > 1,$ the term in the left hand side behaves like $\lambda\left(1 + |x|\right)^{2p\alpha}$ as $|x|\to \infty,$ where $\lambda = (2p)! c^{2p-1} < 1,$ while the term in the right hand side behaves like $\left(1 + |x|\right)^{2p\alpha}.$ From Proposition \ref{prop:iterativepoly} there is $m_0\in {\mathbb N}$ such that
$$
\psi_m(x)\geq x_0\ \ \forall x\in {\mathbb R},\ m\geq m_0.$$ Take $D > 1$ such that
$$
|\psi^{(n)}_m(x)|\leq D \left(1 + |\psi_m(x)|\right)^\alpha\ \ \forall\ 1\leq m\leq m_0,\ x\in {\mathbb R}, \ \in\mathbb{N}.$$ Finally we put $r:=\frac{D}{c} > 1$ and prove by induction that
\begin{equation}\label{eq:induction_upper-bound}
|\psi_m^{(n)}(x)|\leq c \hspace{0.02cm} n!\hspace{0.02cm} r^n \hspace{0.02cm} n^n \hspace{0.02cm}\left(1 + |\psi_m(x)|\right)^\alpha
\end{equation} for all $x\in {\mathbb R},$ $n\in {\mathbb N},\ m\in {\mathbb N}.$ Due to the choice of constants, the inequality (\ref{eq:induction_upper-bound}) is satisfied for $1\leq m\leq m_0,\ n\in {\mathbb N}$ and $x\in {\mathbb R}.$ Let us now assume that (\ref{eq:induction_upper-bound}) is satisfied for some $m\geq m_0$ and every $n\in {\mathbb N}.$ Then, for every $n\geq k$ and $\bm k\in H_{n,k}$ we have
$$
\begin{array}{*2{>{\displaystyle}l}}
\prod_{\ell=1}^n  |\frac{\psi_m^{(\ell)}(x)}{\ell!}|^{k_\ell} & \leq c^k r^n \left(1 + |\psi_m(x)|\right)^{\alpha k} \prod_{\ell = 1}^n \ell^{\ell k_\ell}	\\ & \\ & \underset{\textsf{Lemma \ref{lem:prod-1}}}{\leq} c^k r^n \left(1 + |\psi_m(x)|\right)^{\alpha k} \frac{n^n}{n!}\prod_{\ell=1}^n \ell!^{k_\ell}.
\end{array}$$ Consequently
$$
\begin{array}{*2{>{\displaystyle}l}}
|\psi_{m+1}^{(n)}(x)| & = |\left(\psi\circ\psi_m\right)^{(n)}(x)| \leq \sum_{k=1}^{2p}\sum_{\bm k\in H_{n,k}}\frac{n!}{k_1!\ldots k_n!}|\psi^{(k)}(\psi_m(x))|\cdot \prod_{\ell=1}^n  |\frac{\psi_m^{(\ell)}(x)}{\ell!}|^{k_\ell}\\ & \\ & \leq c r^n n^n \sum_{k=1}^{2p} c^{k-1}\left(1 + |\psi_m(x)|\right)^{\alpha k}|\psi^{(k)}(\psi_m(x))|\sum_{\bm k\in H_{n,k}}\prod_{\ell=1}^n \ell!^{k_\ell}\\\ & \\ & \underset{\textsf{Lemma \ref{lem:combinatorics2}}}{\leq} c r^n n^n n! \sum_{k=1}^{2p} c^{k-1}\left(1 + |\psi_m(x)|\right)^{\alpha k}|\psi^{(k)}(\psi_m(x))|
\end{array}$$ Since $\psi_m(x)\geq x_0$ we can apply inequality (\ref{eq:upper-bound}) to finally conclude that
$$
|\psi_{m+1}^{(n)}(x)|\leq c r^n n^n n!\left(1 + |\psi_{m+1}(x)|\right)^{\alpha}$$ and the induction process is complete. The conclusion now follows from Stirling's formula.
\end{proof}	

\section{Power bounded composition operators}\label{sec:power}
We recall that a continuous linear operator $T:X\to X$ on a Fréchet space $X$ is said to be power bounded if the sequence of iterates $\left(T^m\right)_m$ is equicontinuous. A related concept to power boundedness is that of mean ergodicity. The operator $T:X\to X$ is mean ergodic if the limit
$$
Px:=\lim_n\frac{1}{n}\sum_{k=1}^n T^k(x)$$ exists for every $x\in X.$ Every power bounded operator $T$ on a reflexive Fréchet space $X$ is mean ergodic (see \cite{ABR}), which implies $\displaystyle\lim_n \frac{T^n(x)}{n} = 0$ for every $x\in X.$ There are many articles dedicated to the study of the behavior of the sequence of iterations of a continuous operator in a function space, and especially when said operator is a composition operator.
\par\medskip
The problem we address here is different. It is obvious that if $\psi$ is a non-constant polynomial then the composition operator $C_\psi:f\mapsto f\circ \psi$ maps the Schwartz class ${\mathcal S}({\mathbb R})$ into itself. However, as proven in \cite[Corollary 3.8]{jmaa}, $C_\psi$ does not leave invariant the classical Gelfand-Shilov spaces $\Sigma_d({\mathbb R})$ unless $\psi$ has degree one. On the other hand, if $\omega$ is a subadditive weight and $\sigma(t) = \omega(t^{\frac{1}{2}})$ then $C_\psi\left({\mathcal S}_\omega({\mathbb R})\right) \subset {\mathcal S}_\sigma({\mathbb R})$ (\cite[Theorem 4.4 ]{jmaa}). In particular $C_\psi\left(\Sigma_d({\mathbb R})\right)\subset \Sigma_{2d}({\mathbb R}).$
\par\medskip
Since each iterate $C_{\psi}^m$ can be written as $C_{\psi}^m = C_{\psi_m},$ where
$\psi_m=\psi \circ \overset{(m)}{\dots}\circ \psi$ is a polynomial, we conclude that $\left(C_{\psi}^m\right)_m$ is a sequence of continuous operators from ${\mathcal S}_\omega({\mathbb R})$ into ${\mathcal S}_\sigma({\mathbb R})$. So it is natural to ask whether this family of operators is equicontinuous or not. Since the sequence of operators acts between two different Fréchet spaces, most of the known results are not useful for our purposes, in particular the connection with mean ergodicity.
\par\medskip
Let us begin considering the case that $\psi$ is a polynomial of degree $1.$ Then $C_\psi\left({\mathcal S}_\omega({\mathbb R})\right) \subset {\mathcal S}_\omega({\mathbb R}).$

\begin{proposition} Let $\psi(x) = ax + b,$ with $a\neq 0.$ The following are equivalent:
\begin{enumerate}
\item $C_\psi:{\mathcal S}_\omega({\mathbb R})\to {\mathcal S}_\omega({\mathbb R})$ is power bounded.
\item $C_\psi:{\mathcal S}_\omega({\mathbb R})\to {\mathcal S}_\omega({\mathbb R})$ is mean ergodic.
\item $\left(C_{\psi_m}\right)_m$ is equicontinuous in $\mathcal{L}(\mathcal{S}_{\omega}(\mathbb{R}), \mathcal{S}(\mathbb{R})).$
\item $\psi(x) = x$ or $\psi(x) = -x + b.$
\end{enumerate}	
\end{proposition}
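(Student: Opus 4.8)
The plan is to establish the four conditions equivalent by running the cycle $(4)\Rightarrow(1)\Rightarrow(2)\Rightarrow(4)$ together with the side cycle $(1)\Rightarrow(3)\Rightarrow(4)\Rightarrow(1)$, so that the two implications leaving $(1)$ are purely topological while all the analytic content sits in one family of lower bounds that will refute $(2)$ and $(3)$ simultaneously by contraposition. Throughout I use that a degree-one $\psi(x)=ax+b$ gives $C_{\psi}^m=C_{\psi_m}$ with $\psi_m(x)=a^mx+c_m$, that each $\psi_m$ is a bijection of $\mathbb{R}$, and the chain-rule identity $(f\circ\psi_m)^{(n)}=a^{mn}\,f^{(n)}\circ\psi_m$.

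For the implications out of $(1)$ and into it: $(4)\Rightarrow(1)$ is immediate, since $\psi(x)=x$ gives $C_\psi=I$ and $\psi(x)=-x+b$ gives $\psi_2=\mathrm{id}$, so $\{C_\psi^m:m\in\mathbb{N}\}$ equals $\{I\}$ or the two-element set $\{I,C_\psi\}$, and a finite family of continuous operators is trivially equicontinuous. For $(1)\Rightarrow(2)$ I invoke the fact recalled in the introduction that a power bounded operator on a reflexive Fréchet space is mean ergodic, noting that $\mathcal{S}_\omega(\mathbb{R})$ is nuclear Fréchet and hence reflexive. For $(1)\Rightarrow(3)$ I compose the equicontinuous family $C_{\psi_m}\colon\mathcal{S}_\omega(\mathbb{R})\to\mathcal{S}_\omega(\mathbb{R})$ with the single continuous inclusion $\mathcal{S}_\omega(\mathbb{R})\hookrightarrow\mathcal{S}(\mathbb{R})$, which preserves equicontinuity.

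The core of the argument is the contrapositive showing not-$(4)$ implies both not-$(2)$ and not-$(3)$. So suppose $\psi$ is not of the form in $(4)$, i.e. either $|a|\neq1$, or $a=1$ and $b\neq0$; fix once and for all a nonzero $f\in\mathcal{S}_\omega(\mathbb{R})$. I distinguish three regimes, in each evaluating one Schwartz seminorm at a single well-chosen point. If $|a|>1$, then $\sup_x|(C_\psi^mf)'(x)|=|a|^m\,\sup_x|f'(\psi_m(x))|=|a|^m\,\|f'\|_\infty$ grows geometrically (and $\|f'\|_\infty>0$ since $f\neq0$). If $|a|<1$, then $c_m\to x_\ast=b/(1-a)$, and choosing $y_1\neq x_\ast$ with $f(y_1)\neq0$ and letting $x_m=\psi_m^{-1}(y_1)=(y_1-c_m)/a^m$ gives $\sup_x(1+|x|)\,|f(\psi_m(x))|\geq(1+|x_m|)\,|f(y_1)|$ with $|x_m|\sim|y_1-x_\ast|\,|a|^{-m}\to\infty$. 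If $a=1$ and $b\neq0$, then $\psi_m(x)=x+mb$, and picking $y_0$ with $f(y_0)\neq0$ gives $\sup_x(1+|x|)\,|f(x+mb)|\geq(1+|y_0-mb|)\,|f(y_0)|$, which grows like $m|b|$. In every regime the chosen seminorm of $C_\psi^mf$ is unbounded in $m$, so the orbit of $f$ is unbounded in $\mathcal{S}(\mathbb{R})$ and the family $(C_{\psi_m})_m$ cannot be equicontinuous into $\mathcal{S}(\mathbb{R})$, giving not-$(3)$; and in every regime dividing by $m$ does not tend to $0$ (it tends to $\infty$ when $|a|\neq1$ and to $|b|\,|f(y_0)|\neq0$ when $a=1$), which contradicts the relation $\lim_m T^mf/m=0$ forced by mean ergodicity, giving not-$(2)$.

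I expect the main difficulty to be organizational rather than technical: arranging the three degenerate regimes so that a single test function together with the two seminorms $g\mapsto\sup_x|g'(x)|$ and $g\mapsto\sup_x(1+|x|)|g(x)|$ disproves $(2)$ and $(3)$ at once. One point requiring a little care is that refuting mean ergodicity only needs the estimate in the weaker Schwartz topology: because the inclusion $\mathcal{S}_\omega(\mathbb{R})\hookrightarrow\mathcal{S}(\mathbb{R})$ is continuous, convergence of $T^mf/m$ to $0$ in $\mathcal{S}_\omega(\mathbb{R})$ would force convergence to $0$ in each Schwartz seminorm, exactly what the lower bounds above contradict, so the same computations do double duty for $(2)$ and $(3)$.
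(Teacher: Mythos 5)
Your proof is correct and follows essentially the same strategy as the paper: the implications $(4)\Rightarrow(1)$, $(1)\Rightarrow(2)$ (reflexivity plus power bounded $\Rightarrow$ mean ergodic) and $(1)\Rightarrow(3)$ are handled exactly as there, and the core is the same contrapositive, showing that if $(4)$ fails then $\frac{1}{m}C_{\psi_m}f$ is unbounded in ${\mathcal S}(\mathbb{R})$ for a suitable $f$, which refutes $(2)$ and $(3)$ simultaneously. The only cosmetic difference is that the paper reduces the case $a\neq 1$ to $\phi(x)=ax$ via linear equivalence and tests a function with $f'(0)=f'(\pm 1)=1$ against the quantity $\frac{x_m^2}{m}\left|(C_{\psi_m}f)'(x_m)\right|$, whereas you work directly with $\psi_m(x)=a^mx+c_m$ and split the regimes between the seminorms $g\mapsto\sup_x|g'(x)|$ and $g\mapsto\sup_x(1+|x|)|g(x)|$.
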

\begin{proof}
$(1)\Rightarrow (3)$ is obvious. $(1)\Rightarrow (2)$ is clear because ${\mathcal S}_\omega({\mathbb R})$ is reflexive and $(4)\Rightarrow (1)$ holds since $\{\psi_{m}: m\in\mathbb{N}\}$ is a finite set of affine functions. We now assume that $(4)$ is not satisfied. Take $f\in {\mathcal S}_\omega({\mathbb R})$ such that $f'(-1)=f'(1)=f'(0)=1.$ We will check that $\left(\frac{1}{m}C_{\psi_m}f\right)_m$ is unbounded in $\mathcal{S}(\mathbb{R}),$ which implies that neither (2) nor (3) are satisfied. To this end we distinguish several cases.
 \par If $a=1$ then $\psi_m(x) = x+mb$ with $b\neq 0.$ For $x_m:=-bm$ we have
 $$
 \lim_m \frac{x_m^2}{m}\left|\left(C_{\psi_m}f\right)'(x_m)\right| = \infty$$ and we are done.
 \par
 If $a\neq 1$ then $\psi(x)$ is linearly equivalent to $\phi(x) = ax.$ We take $x_m = |a|^{-m}.$ In the case $0<|a|<1$ we have
 $$
 \lim_{m}\frac{x_m^2}{m}\left|\left(C_{\phi_m}f\right)'(x_m)\right| = \lim_m \frac{x_m}{m} = \infty,$$ while for $|a| > 1$ we have
 $$
 \lim_m \frac{1}{m}\left|\left(C_{\phi_m}f\right)'(x_m)\right| = \lim_m \frac{|a|^m}{m} = \infty.$$ The proof is complete.
\end{proof}	
We will need the following result: \begin{proposition}
    \label{conditionpowerboundedness} Let $\omega$ be a subadditive weight, $a\geq 1$, $\sigma(t)=\omega(t^{\frac{1}{a}})$ for all $t\geq 0$. Assume that \begin{enumerate}
        \item there is $C_0>0$ such that $|x|\leq C_0 (1+|\psi_m(x)|)^a$ for all $x\in\mathbb{R}$, $m\in\mathbb{N}$,
        \item for all $\lambda>0$ there is $C_\lambda>0$ such that \begin{equation*}
            |\psi^{(n)}_m(x)|\leq C_\lambda \hspace{0.1cm} \exp(\lambda \varphi_\sigma^*(\frac{n}{\lambda}))\hspace{0.1cm}(1+|\psi_m(x)|)^p
        \end{equation*} for all $j\in\mathbb{N}$, $x\in\mathbb{R}$, $m\in\mathbb{N}$, where $p=a-1$. 
    \end{enumerate} Then, $C_\psi: \mathcal{S}_{\omega}(\mathbb{R}) \to \mathcal{S}_{\sigma}(\mathbb{R})$ is power bounded. 
\end{proposition}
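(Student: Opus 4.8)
The plan is to establish equicontinuity of the family $\{C_{\psi_m}\}_m$ directly: I fix $\mu>0$ and look for $\lambda>0$ and $C>0$, \emph{independent of $m$}, such that $\|C_{\psi_m}f\|_{\sigma,\mu}\le C\,\|f\|_{\omega,\lambda}$ for every $f\in\mathcal S_\omega(\mathbb R)$ and every $m$. Since both hypotheses hold with constants independent of $m$, all the constants produced below will be independent of $m$, and this uniform continuity estimate is exactly the asserted power boundedness in the present two-space sense. Throughout I write $y=\psi_m(x)$.

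The computation rests on the Faà di Bruno formula
\[ (C_{\psi_m}f)^{(n)}(x)=\sum_{k=1}^{n}f^{(k)}(y)\sum_{\bm k\in H_{n,k}}\frac{n!}{k_1!\cdots k_n!}\prod_{\ell=1}^{n}\Big(\frac{\psi_m^{(\ell)}(x)}{\ell!}\Big)^{k_\ell}. \]
The factor $(1+|x|)^q$ occurring in $\|\cdot\|_{\sigma,\mu}$ is first transferred onto $f$: hypothesis (1) gives $(1+|x|)^q\le(1+C_0)^q(1+|y|)^{aq}$, while hypothesis (2) produces a factor $(1+|y|)^{pk}$ out of the $k=\sum_\ell k_\ell$ derivatives of $\psi_m$. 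All of these powers of $(1+|y|)$ are swallowed by the decay of $f$: with $M:=aq+pk$ the definition of $\|f\|_{\omega,\lambda}$ yields $(1+|y|)^{M}|f^{(k)}(y)|\le\|f\|_{\omega,\lambda}\exp\big(\lambda\varphi_\omega^*((k+M)/\lambda)\big)$. Here the relation between the two weights is used decisively: since $p=a-1$ we have $k+M=a(k+q)$, and $\sigma(t)=\omega(t^{1/a})$ gives $\varphi_\sigma(t)=\varphi_\omega(t/a)$, hence $\varphi_\sigma^*(s)=\varphi_\omega^*(as)$. Thus the whole $f$-contribution equals $\|f\|_{\omega,\lambda}\,M_{k+q}$ with $M_j:=\exp(\lambda\varphi_\sigma^*(j/\lambda))$; it sits exactly at order $k+q$ in the $\sigma$-scale.

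Inserting hypothesis (2) into the remaining product $\prod_\ell(\psi_m^{(\ell)})^{k_\ell}$, each term of the double sum is at most $(1+C_0)^q\|f\|_{\omega,\lambda}\,C_\lambda^{k}\,M_{k+q}\,N(\bm k)\prod_\ell M_\ell^{k_\ell}$, where $N(\bm k)=n!/\prod_\ell\big(k_\ell!\,(\ell!)^{k_\ell}\big)$ is the integer counting the associated set partitions. Everything is reduced to the inequality $\sum_{k=1}^{n}M_{k+q}\sum_{\bm k\in H_{n,k}}C_\lambda^{k}N(\bm k)\prod_\ell M_\ell^{k_\ell}\le C\exp(\mu\varphi_\sigma^*((n+q)/\mu))$, a statement purely about the sequence $M_j$. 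The model case is Gevrey, $\sigma(t)=t^{1/d}$, where $M_j$ is comparable to $A^{j}(j!)^{d}$: cancelling the $\prod_\ell(\ell!)^{k_\ell}$ in $N(\bm k)$ and using $(k+q)!\,n!\le(n+q)!\,k!$ (valid since $k\le n$), the inner sum collapses to $\big(k!/\prod_\ell k_\ell!\big)^{d}$ up to a geometric factor; since $\sum_{\bm k\in H_{n,k}}k!/\prod_\ell k_\ell!$ is the number $\binom{n-1}{k-1}$ of compositions of $n$ into $k$ parts and $\sum_{k}\binom{n-1}{k-1}=2^{n-1}$, the total is $\le 2^{dn}(n+q)!^{d}$ and only a geometric factor in $n+q$ survives. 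For a general weight the two facts replacing these identities are (i) the log-convexity of $j\mapsto\log M_j=\lambda\varphi_\sigma^*(j/\lambda)$, which gives the shifted monotonicity $M_{k+q}M_n\le M_{n+q}M_k$ for $k\le n$ --- this is what prevents the order from doubling --- and (ii) the domination $M_j\ge c^{j}j!$, a consequence of $\sigma(t)=O(t)$ (equivalently $\varphi_\sigma^*(s)\ge s\log s-Cs$), which lets the $M$-factors absorb the combinatorial weights; Lemma \ref{lem:combinatorics2} is precisely the tool controlling the relevant sums of $\prod_\ell(\ell!)^{k_\ell}$.

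The hard part is this last bookkeeping for an arbitrary weight: one must show $\sum_{\bm k\in H_{n,k}}N(\bm k)\prod_\ell M_\ell^{k_\ell}\le G^{n}\,(M_n/M_k)\binom{n-1}{k-1}$, after which the shifted monotonicity turns $M_{k+q}\,M_n/M_k$ into $M_{n+q}$ and the geometric sum $\sum_k\binom{n-1}{k-1}=2^{n-1}$ finishes the estimate. The residual geometric factors in $n+q$ (including the $C_\lambda^{k}\le C_\lambda^{n}$ coming from hypothesis (2), and those arising in comparing the indices used in the two hypotheses) are then absorbed into $\exp(\mu\varphi_\sigma^*((n+q)/\mu))$ by passing to a possibly stronger source seminorm $\|\cdot\|_{\omega,\lambda}$; this is legitimate because $\varphi_\sigma^*(s)/s\to\infty$ together with the standard index-scaling relations for $\varphi_\sigma^*$ (a consequence of condition $(\alpha)$) permit the inequality $(n+q)\log G+\lambda\varphi_\sigma^*((n+q)/\lambda)\le\mu\varphi_\sigma^*((n+q)/\mu)+C$. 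None of the constants $C_0,\,C_\lambda,\,G$ depends on $m$, so the final bound $\|C_{\psi_m}f\|_{\sigma,\mu}\le C\|f\|_{\omega,\lambda}$ holds uniformly in $m$, establishing power boundedness.
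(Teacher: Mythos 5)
Your overall architecture is sound, and it is in fact more self-contained than the paper's: the paper simply adapts the computation of \cite[Proposition 4.1]{jmaa} (whose constants are uniform in $m$ precisely because hypotheses (1)--(2) are) to conclude that each orbit $\left(C_{\psi_m}f\right)_m$ is bounded in ${\mathcal S}_\sigma({\mathbb R})$, and then invokes the Banach--Steinhaus theorem to upgrade pointwise boundedness to equicontinuity, whereas you aim at the uniform estimate $\|C_{\psi_m}f\|_{\sigma,\mu}\le C\|f\|_{\omega,\lambda}$ directly. Your reductions are correct (transfer of $(1+|x|)^q$ via hypothesis (1), the identity $\varphi_\sigma^*(s)=\varphi_\omega^*(as)$ placing the $f$-contribution at $\sigma$-order $k+q$, the count $\sum_{\bm k\in H_{n,k}}k!/\prod_\ell k_\ell!=\binom{n-1}{k-1}$), and your Gevrey case is essentially complete: the cancellation $(k+q)!\,n!\le(n+q)!\,k!$ against the Fa\`a di Bruno coefficient is exactly what prevents the order from doubling. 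But the proposition is stated for an arbitrary subadditive weight, and there your proof has a genuine gap which you flag yourself: the inequality $\sum_{\bm k\in H_{n,k}}N(\bm k)\prod_\ell M_\ell^{k_\ell}\le G^{n}\,(M_n/M_k)\binom{n-1}{k-1}$ is asserted (``one must show''), never proven. This is not routine bookkeeping, and the two tools you name do not yield it as naively applied: superadditivity of $\varphi_\sigma^*$ gives $\prod_\ell M_\ell^{k_\ell}\le M_n$, but then $\sum_{\bm k\in H_{n,k}}N(\bm k)$ is a Stirling number and summing over $k$ produces the Bell number $B_n$, which is not $O(G^n)$; alternatively, convexity (Karamata-type majorization) gives $M_{k+q}\prod_\ell M_\ell^{k_\ell}\le M_k M_{n+q}$, but the leftover $M_k$ for $k\sim n$ is precisely the order-doubling you must avoid. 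A correct argument has to interpolate between the many-small-parts and few-large-parts regimes, combining convexity of $\varphi_\sigma^*$ with the lower bounds $\exp(\lambda\varphi_\sigma^*(j/\lambda))\ge C_R R^j j!$; that interpolation is the actual content of \cite[Proposition 4.1]{jmaa}, i.e.\ exactly the part the paper does not redo and you have not supplied.

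A second, unresolved point is the order in which parameters are chosen in your absorption step. Your geometric factor $G$ contains the constant $C_\lambda$ of hypothesis (2) applied at the same index $\lambda$ as the source seminorm; the absorption $(n+q)\log G+\lambda\varphi_\sigma^*((n+q)/\lambda)\le\mu\varphi_\sigma^*((n+q)/\mu)+C$ then requires $\lambda$ to be taken large depending on $G$, which changes $C_\lambda$ and hence $G$ --- a circularity. It is fixable: apply hypothesis (2) with a parameter $\rho$ fixed first in terms of the target $\mu$, and only afterwards enlarge the source index $\lambda$ to kill the remaining $\rho$-dependent constants (in the Gevrey model this works because the factor $A_\lambda$ in $\exp(\lambda\varphi_\sigma^*(j/\lambda))\sim A_\lambda^j (j!)^{d}$ tends to $0$ as $\lambda\to\infty$). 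But this decoupling of the two indices has to be carried out explicitly, and your write-up does not do it.
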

\begin{proof} We can adapt the proof of Proposition 4.1 in \cite{jmaa} to prove that $\left(C_{\psi_m}f\right)_m$ is a bounded sequence in ${\mathcal S}_\sigma({\mathbb R})$ for each $ f\in {\mathcal S}_\omega({\mathbb R}).$ The Banach-Steinhaus theorem gives the conclusion.
\end{proof}

Concerning polynomials of degree greater than one we have the following characterization. To prove $(2)\Rightarrow(3)$ we essentially follow the idea of \cite[Theorem 3.11]{fgj}. We present the details of the proof because the argument of \cite{fgj} is based on the fact that every power bounded operator $T:X\to X$ on a reflexive Fréchet space is mean ergodic. The proof of $(3)\Rightarrow (1)$ is very different from that of \cite{fgj} and is based on Proposition \ref{prop:upper_bound_derivatives}.

\begin{theorem}\label{teo:power-bounded} Let $\omega$ be any subadditive weight and $\sigma(t)= \omega(t^{\frac{1}{a}})$ for $a>2.$ Given a polynomial $\psi$ of degree greater than one, the following statements are equivalent:\begin{enumerate}
		\item $\left(C_{\psi_m}\right)_m$ is equicontinuous in $\mathcal{L}(\mathcal{S}_{\omega}(\mathbb{R}), \mathcal{S}_\sigma(\mathbb{R})).$
		\item $\left(C_{\psi_m}\right)_m$ is equicontinuous in $\mathcal{L}(\mathcal{S}_\omega(\mathbb{R}), \mathcal{S}(\mathbb{R})).$
		\item $\psi$ lacks fixed points.
		\end{enumerate}
\end{theorem}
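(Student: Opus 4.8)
The plan is to prove the cyclic chain of implications $(1)\Rightarrow(2)\Rightarrow(3)\Rightarrow(1)$, exploiting that $\sigma(t)=\omega(t^{1/a})$ with $a>2$ is a weaker weight than $\sigma_0(t)=\omega(t^{1/2})$, so that $\mathcal S_{\sigma_0}(\mathbb R)\hookrightarrow\mathcal S_\sigma(\mathbb R)\hookrightarrow\mathcal S(\mathbb R)$ continuously. The implication $(1)\Rightarrow(2)$ is then immediate since equicontinuity into the smaller space $\mathcal S_\sigma(\mathbb R)$ passes to equicontinuity into the larger Schwartz space $\mathcal S(\mathbb R)$ by composing with the inclusion.

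For $(2)\Rightarrow(3)$ I would argue by contraposition: assume $\psi$ \emph{has} a fixed point $x_*$, i.e.\ $\psi(x_*)=x_*$, and produce $f\in\mathcal S_\omega(\mathbb R)$ for which $(C_{\psi_m}f)_m$ is unbounded in $\mathcal S(\mathbb R)$, contradicting equicontinuity via Banach--Steinhaus. Following the idea of \cite[Theorem 3.11]{fgj}, one uses that at a fixed point the iterates concentrate: all $\psi_m$ fix $x_*$, and the behaviour of $\psi_m'(x_*)=\psi'(x_*)^m$ together with the derivatives of $\psi_m$ near $x_*$ forces growth. Concretely, choosing $f$ with suitable nonvanishing derivatives at $x_*$ (or at the points of the orbit), one evaluates a Schwartz seminorm of $C_{\psi_m}f=f\circ\psi_m$ at $x_*$ and shows it grows unboundedly in $m$; since every power bounded operator on a reflexive Fréchet space is mean ergodic, unboundedness of $(\tfrac1m C_{\psi_m}f)_m$ suffices to rule out equicontinuity, exactly mirroring the degree-one argument in the preceding proposition. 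The role of the fixed point is what distinguishes this from the degree-one case, where the affine orbit escapes to infinity.

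The substantive implication is $(3)\Rightarrow(1)$, and this is where Proposition \ref{prop:upper_bound_derivatives} does the work. Assuming $\psi$ lacks fixed points, I would verify the two hypotheses of Proposition \ref{conditionpowerboundedness} with this $a>2$. For hypothesis (1), the lower bounds of Proposition \ref{prop:iterativepoly} give $|\psi_m(x)|\geq b^{2^k}$ on a terminal tail of iterates, which combined with elementary comparison of $|x|$ against $(1+|\psi_m(x)|)^a$ (recalling $\psi$ is linearly equivalent to a monic polynomial of even degree $2p$ with $a>2\geq$ what is needed when $p=1$, and more generally $a$ accommodates the degree) yields the required $|x|\leq C_0(1+|\psi_m(x)|)^a$ uniformly in $m$. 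For hypothesis (2) I would feed the bound
\[
|\psi_m^{(n)}(x)|\leq C\,r^n\,n!^2\,(1+|\psi_m(x)|)^\alpha
\]
from Proposition \ref{prop:upper_bound_derivatives} into the definition of $\varphi_\sigma^*$; here the choice $a>2$ is essential because $n!^2$ corresponds to a Gevrey-type growth of exponent $2$, which must be absorbed into $\exp(\lambda\varphi_\sigma^*(n/\lambda))$ for every $\lambda>0$, and the reweighting $\sigma(t)=\omega(t^{1/a})$ with $a>2$ is precisely what makes $\varphi_\sigma^*$ large enough to dominate $r^n n!^2$. One also takes $\alpha>1$ close enough to $1$ so that $\alpha\leq p=a-1$, matching the exponent in hypothesis (2).

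The main obstacle I anticipate is the bookkeeping in $(3)\Rightarrow(1)$: translating the Gevrey-$2$ factor $n!^2$ and the constant $r^n$ into the Young-conjugate formulation $\exp(\lambda\varphi_\sigma^*(n/\lambda))$ for \emph{all} $\lambda>0$ requires the standard but delicate relation between powers of factorials and $\varphi_\sigma^*$ under the rescaling $t\mapsto t^{1/a}$, and one must check the threshold $a>2$ is exactly what is consumed by the $n!^2$ growth rather than merely sufficient. A secondary subtlety is ensuring all estimates are uniform in $m$, which is guaranteed by the uniformity already built into Propositions \ref{prop:iterativepoly} and \ref{prop:upper_bound_derivatives}, so that the boundedness of $(C_{\psi_m}f)_m$ and the final Banach--Steinhaus step go through cleanly.
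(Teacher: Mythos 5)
Your cyclic chain $(1)\Rightarrow(2)\Rightarrow(3)\Rightarrow(1)$ is the same as the paper's, and your treatment of $(1)\Rightarrow(2)$ (continuity of the inclusion $\mathcal S_\sigma(\mathbb R)\hookrightarrow\mathcal S(\mathbb R)$) and of $(3)\Rightarrow(1)$ (verify the two hypotheses of Proposition \ref{conditionpowerboundedness}, using Proposition \ref{prop:upper_bound_derivatives} with $\alpha\leq p=a-1$ and absorbing $r^nn!^2$ into $\exp(\lambda\varphi_\sigma^\ast(n/\lambda))$ via $\varphi_\sigma^\ast(s)=\varphi_\omega^\ast(as)$ and $a>2$) essentially coincides with the paper's proof. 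One repair needed there: hypothesis (1) of Proposition \ref{conditionpowerboundedness} does not follow from Proposition \ref{prop:iterativepoly}, whose lower bound $b^{2^k}$ is independent of $x$ and holds only for $m\geq m_0$, so it cannot control $|x|$ for $|x|$ large; what is needed (and what the paper uses) is the elementary escape estimate $|\psi(x)|\geq 2|x|$ for $|x|\geq K$, which gives $|\psi_m(x)|\geq 2^m|x|$ there and hence $|x|\leq K(1+|\psi_m(x)|)$ for all $m$.

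The genuine gap is in $(2)\Rightarrow(3)$. Your mechanism --- growth of derivatives of $\psi_m$ evaluated at a fixed point $x_\ast$ --- is not available in general, because a fixed point need not force any growth at that point. If $x_\ast$ is attracting or superattracting the derivative data at $x_\ast$ contract: for $\psi(x)=-x^3$ the unique real fixed point is $0$, one has $\psi_m(x)=(-1)^mx^{3^m}$, and every derivative of $\psi_m$ at $0$ of order less than $3^m$ vanishes, so no seminorm of $f\circ\psi_m$ evaluated through Taylor data at the fixed point grows; and in the parabolic case $\psi(x)=x^2+\tfrac14$ one has $\psi'(\tfrac12)=1$, so your quantity $\psi'(x_\ast)^m$ is constant and one must pass to higher-order derivatives. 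A quantitative repair is possible (one can check that $\psi$ or $\psi\circ\psi$ always has a real fixed point with multiplier $\geq 1$, then treat the repelling and parabolic cases separately), but none of this is in your sketch, and the claim as written is false for $-x^3$. In addition, your appeal to ``every power bounded operator on a reflexive Fr\'echet space is mean ergodic'' is inapplicable: for $\deg\psi>1$ the operator $C_\psi$ does not map $\mathcal S_\omega(\mathbb R)$ into itself, the iterates act between two different spaces, and the paper says explicitly that this is exactly why the argument of \cite{fgj} cannot be transplanted (it is also unnecessary: equicontinuity already implies pointwise boundedness of orbits, so exhibiting one unbounded orbit suffices). The paper's actual proof of $(2)\Rightarrow(3)$ is soft and uniform in all cases: if $(C_{\psi_m})_m$ were equicontinuous into $\mathcal S(\mathbb R)$, take $f\in\mathcal S_\omega(\mathbb R)$ equal to $1$ on $[-b,b]$, where $b=\sup\{|x|:x\in A\}$ and $A$ is the nonempty, bounded, forward-invariant set of points whose orbit does not diverge; then $(f\circ\psi_m)_m$ is bounded in the Montel space $\mathcal S(\mathbb R)$, hence a subsequence converges pointwise to a continuous $g$ with $g(\pm b)=1$ (approximating from inside $A$) while $g(x)=0$ for $|x|>b$, contradicting continuity. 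You should either adopt that argument or carry out the fixed-point case analysis in full.
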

\begin{proof} According to \cite[Theorem 4.4]{jmaa}, $C_{\psi_m}\left(\mathcal{S}_{\omega}(\mathbb{R})\right) \subset \mathcal{S}_{\sigma}(\mathbb{R})$ for every $m\in {\mathbb N}.$
\par
$(1)\Rightarrow (2)$ is obvious.
\par
$(2)\Rightarrow (3).$ Let us assume that $\psi$ has some fixed point. There is $K > 0$ such that $|\psi(x)|\geq 2|x|$ for every $|x|\geq K,$ hence $|\psi_m(x)|\geq 2^m |x|$ for all $m\in {\mathbb N}$ and $|x|\geq K.$ In particular $\displaystyle\lim_m \psi_m(x) = \infty$ whenever $|x|\geq K.$ We consider
$$
A=\left\{x\in {\mathbb R}:\ \left(\psi_m(x)\right)_m\ \mbox{does not diverge to}\ \infty\right\}.$$ Since $\psi$ has some fixed point then the set $A$ is non-empty and, due to the previous considerations, it is bounded. Take $b = \sup\{|x|:\ x\in A\}$ and let $f\in \mathcal{S}_{\omega}(\mathbb{R})$ be given such that $f(x) = 1$ for all $|x|\leq b.$
By condition $(2),$ the sequence $\left(C_{\psi_m}f\right)_m$ is bounded in the Montel space $\mathcal{S}(\mathbb{R}),$ hence it admits a subsequence that converges pointwise to some $g\in \mathcal{S}(\mathbb{R}).$ For simplicity we denote the subsequence as the entire sequence. There is a sequence $\left(x_n\right)_n\subset A$ that converges to either $b$ or $-b.$ Moreover, $\psi_m(x_n)\in A\subset [-b,b]$ for every $m,n\in {\mathbb N},$ hence
$$
g(x_n) = \lim_m f\left(\psi_m(x_n)\right) = 1\ \ \forall n\in {\mathbb N}.$$ If $\left(x_n\right)_n$ converges to $b$ then $g(b) = 1.$ However, for $|x| > b$ the sequence $\left(\psi_m(x)\right)_m$ diverges to $\infty,$ which implies
$$
g(x) = \lim_m f\left(\psi_m(x)\right) = 0.$$ This is a contradiction. In the case that $\left(x_n\right)_n$ converges to $-b$ we argue similarly.
\par
$(3)\Rightarrow (1).$ We only need to check that the two conditions appearing in Proposition \ref{conditionpowerboundedness} hold:\\

(i) Since $\psi$ has even degree it is equivalent to a monic polynomial lacking fixed points. Hence, we will assume that $\psi$ itself is monic.
As in the proof of $(2)\Rightarrow (3),$ there is $K > 1$ such that $|\psi_m(x)|\geq 2^m |x|$ for all $m\in {\mathbb N}$ and $|x|\geq K.$ Then
$$
|x|\leq 1 + |\psi_m(x)|\ \ \forall\ |x|\geq K,\ m\in {\mathbb N}$$ and condition (i) is satisfied with $C_0 = K.$
\par
(ii) We recall that $\varphi_\sigma^\ast(s) = \varphi_\omega^\ast(as),\ s\geq 0.$ Take $\alpha = p > 1$ in Proposition \ref{prop:upper_bound_derivatives}. There exist $C > 0$ and $r > 1$ such that
$$
|\psi^{(n)}_m(x)|\leq C r^n n!^2 \left(1 + |\psi_m(x)|\right)^p$$ for all $x\in \mathbb{R}$, $n\in\mathbb{N},m\in\mathbb{N}.$ By \cite[Lemma A.1.(ii), (viii)]{paley}, for every $\lambda > 0$ there exist $B_\lambda > 0$ such that
$$
r^n n!^2 \leq B_\lambda\exp\left(\lambda\varphi_\omega^\ast\left(\frac{2n}{\lambda}\right)\right) \leq B_\lambda\exp\left(\lambda\varphi_\sigma^\ast\left(\frac{n}{\lambda}\right)\right).$$ Hence, condition (ii) is satisfied with $C_\lambda = C B_\lambda$ and the proof is complete.
\end{proof}

\begin{proposition}\label{prop:mean-ergodic}
Let $\omega$ be any subadditive weight and $\sigma(t)= \omega(t^{\frac{1}{a}})$ for $a>2.$ Given a polynomial $\psi$ of degree greater than one, the following statements are equivalent:
\begin{enumerate}
\item $\psi$ lacks fixed points.
\item $\displaystyle\lim_m f\circ \psi_m = 0$ in ${\mathcal S}_\sigma({\mathbb R})$ for every $f\in {\mathcal S}_\omega({\mathbb R}).$
\item $\displaystyle\lim_n \frac{1}{n}\sum_{m=1}^n f\circ \psi_m$ exists in ${\mathcal S}_\sigma({\mathbb R})$ for every $f\in {\mathcal S}_\omega({\mathbb R}).$
\item $\displaystyle\lim_n \frac{1}{n}\sum_{m=1}^n f\circ \psi_m$ exists in ${\mathcal S}({\mathbb R})$ for every $f\in {\mathcal S}_\omega({\mathbb R}).$
\end{enumerate}
\end{proposition}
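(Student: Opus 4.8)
The plan is to prove the cyclic chain $(1)\Rightarrow(2)\Rightarrow(3)\Rightarrow(4)\Rightarrow(1)$, leaning throughout on the equicontinuity already established in Theorem~\ref{teo:power-bounded} and on the lower bounds of Proposition~\ref{prop:iterativepoly}. First I would record a preliminary observation used under hypothesis $(1)$: since $\psi$ has degree greater than one, lacking fixed points forces the degree to be \emph{even}, because $\psi(x)-x$ would then be an odd-degree real polynomial and hence would have a real root, i.e.\ a fixed point. Thus Proposition~\ref{prop:iterativepoly} applies and gives $\inf_{x\in\mathbb{R}}|\psi_m(x)|\to\infty$ as $m\to\infty$; combined with the rapid decay of any $f\in\mathcal{S}_\omega(\mathbb{R})$, this yields $f(\psi_m(x))\to 0$ for every $x$.

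For $(1)\Rightarrow(2)$ I would proceed as follows. By Theorem~\ref{teo:power-bounded} the family $(C_{\psi_m})_m$ is equicontinuous in $\mathcal{L}(\mathcal{S}_\omega(\mathbb{R}),\mathcal{S}_\sigma(\mathbb{R}))$, so for each fixed $f$ the sequence $(f\circ\psi_m)_m$ is bounded in $\mathcal{S}_\sigma(\mathbb{R})$. Because $\mathcal{S}_\sigma(\mathbb{R})$ is a nuclear Fréchet space, and hence a Montel space, this sequence is relatively compact. To conclude $f\circ\psi_m\to 0$ in $\mathcal{S}_\sigma(\mathbb{R})$ it then suffices to identify the unique cluster point: any $\mathcal{S}_\sigma$-convergent subsequence converges in particular pointwise, and by the preliminary observation its pointwise limit is $0$. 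Hence $0$ is the only cluster point of the relatively compact sequence, and the whole sequence converges to it.

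The implication $(2)\Rightarrow(3)$ is the elementary fact that Cesàro averages of a sequence tending to $0$ tend to $0$: applying each continuous seminorm $\|\cdot\|_{\sigma,\lambda}$ to $\frac1n\sum_{m=1}^n f\circ\psi_m$ and using $\|f\circ\psi_m\|_{\sigma,\lambda}\to 0$ shows the averages converge to $0$, so the limit exists. The implication $(3)\Rightarrow(4)$ is immediate from the continuity of the inclusion $\mathcal{S}_\sigma(\mathbb{R})\hookrightarrow\mathcal{S}(\mathbb{R})$. Finally I would prove $(4)\Rightarrow(1)$ by contraposition, reusing the mechanism of the proof of $(2)\Rightarrow(3)$ in Theorem~\ref{teo:power-bounded}: assuming $\psi$ has a fixed point, the set $A=\{x:(\psi_m(x))_m\text{ does not diverge to }\infty\}$ is non-empty, forward invariant (a tail of a non-divergent orbit is non-divergent), and bounded; set $b=\sup\{|x|:x\in A\}$ and choose $f\in\mathcal{S}_\omega(\mathbb{R})$ with $f\equiv 1$ on $[-b,b]$. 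On $A$ one has $\psi_m(x)\in A\subset[-b,b]$, so every average equals $1$ there, whereas for $|x|>b$ one has $\psi_m(x)\to\infty$ and the averages tend to $0$. Were the averages to converge in $\mathcal{S}(\mathbb{R})$ to some $h$, then $h$ would be continuous, equal to $1$ along a sequence in $A$ approaching $b$ or $-b$, and equal to $0$ on $\{|x|>b\}$, forcing $h(b)$ (or $h(-b)$) to be both $1$ and $0$; the contradiction shows $(4)$ fails.

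The main obstacle, to my mind, is the upgrade in $(1)\Rightarrow(2)$ from mere pointwise convergence to convergence in the topology of $\mathcal{S}_\sigma(\mathbb{R})$. This is precisely where the Montel property, together with the equicontinuity supplied by Theorem~\ref{teo:power-bounded}, is indispensable, and where the even-degree reduction is needed so that the uniform divergence of $\psi_m$ from Proposition~\ref{prop:iterativepoly} is available to pin down the cluster point; the remaining three implications are then routine.
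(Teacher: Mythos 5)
Your proof is correct and follows essentially the same route as the paper: the key step $(1)\Rightarrow(2)$ is exactly the paper's argument (boundedness of $(f\circ\psi_m)_m$ in the Fréchet--Montel space ${\mathcal S}_\sigma({\mathbb R})$ via Theorem~\ref{teo:power-bounded}, with the zero function as the only possible cluster point), the middle implications are treated as routine, and your contraposition for $(4)\Rightarrow(1)$ is precisely the adaptation of the fixed-point argument from Theorem~\ref{teo:power-bounded}\,$(2)\Rightarrow(3)$ that the paper delegates to \cite{fgj} ``with the obvious changes.'' The only cosmetic difference is that you obtain pointwise divergence of $(\psi_m(x))_m$ from Proposition~\ref{prop:iterativepoly} (after the even-degree observation), whereas the paper uses the simpler fact that $\psi(x)>x+b$ or $\psi(x)<x-b$ for all $x$.
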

\begin{proof}
$(1)\Rightarrow (2).$ Since $\psi$ lacks fixed points then either $\psi(x) > x + b$ for every $x\in {\mathbb R}$ and some $b > 0$ or $\psi(x) < x - b$ for every $x\in {\mathbb R}$ and some $b > 0.$ In either case we have $\displaystyle\lim_m \textcolor{red}{|}\psi_m(x)\textcolor{red}{|}= \infty$ for all $x\in {\mathbb R},$ which implies $\displaystyle\lim_m f\left(\psi_m(x)\right) = 0$ for every $x\in {\mathbb R}.$ Now the conclusion follows from the fact that $\left(f\circ\psi_m\right)_m$ is a bounded sequence in the Fréchet Montel space ${\mathcal S}_\sigma({\mathbb R})$ (Theorem \ref{teo:power-bounded}) and its only possible accumulation point is the zero function.
\par
$(2)\Rightarrow (3)\Rightarrow (4)$ are obvious. To show $(4)\Rightarrow (1)$ we can proceed as in the proofs of \cite[Proposition 3.4 and Theorem 3.11 $(2)\Rightarrow(3)$]{fgj} with the obvious changes.
\end{proof}
We do not know whether the above results are also true for $a=2$. 

It may be worth making the above results explicit in the case where the weight $\omega$ is a power of the logarithm. In this case, keeping the notation of the previous result,  ${\mathcal S}_\omega({\mathbb R})={\mathcal S}_\sigma({\mathbb R}).$ The limit case $p = 1$ corresponds to \cite[Theorem 3.11]{fgj}, since in this case ${\mathcal S}_\omega({\mathbb R})={\mathcal S}({\mathbb R})$, despite of the fact that $\omega$ would not be strictly speaking a weight function (Definition \ref{def:weight}$(\gamma)$ does not hold). 

\begin{corollary}\label{cor:powers-logarithm}
	Let $\omega(x)=\max\{0,\log ^p(x)\}$ with $p>1.$ Given a polynomial $\psi$ of degree greater than one, the following statements are equivalent:\begin{enumerate}
\item $C_\psi:{\mathcal S}_\omega({\mathbb R})\to {\mathcal S}_\omega({\mathbb R})$ is power bonded.
\item $\psi$ lacks fixed points.
\item $C_\psi:{\mathcal S}_\omega({\mathbb R})\to {\mathcal S}_\omega({\mathbb R})$ is mean ergodic.
\end{enumerate}
\end{corollary}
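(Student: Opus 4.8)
The plan is to reduce the corollary to the two results already established in this section, Theorem \ref{teo:power-bounded} and Proposition \ref{prop:mean-ergodic}, by showing that for the logarithmic weight the target space $\mathcal{S}_\sigma(\mathbb{R})$ coincides with $\mathcal{S}_\omega(\mathbb{R})$, so that the operators $C_\psi:\mathcal{S}_\omega(\mathbb{R})\to\mathcal{S}_\omega(\mathbb{R})$ and $C_\psi:\mathcal{S}_\omega(\mathbb{R})\to\mathcal{S}_\sigma(\mathbb{R})$ are literally the same map.

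First I would record that for $\omega(x)=\max\{0,\log^p x\}$ with $p>1$ condition $(\gamma)$ of Definition \ref{def:weight} holds (since $\log(1+t^2)/\log^p t\to 0$ as $t\to\infty$), so $\omega$ is a subadditive weight to which the previous results apply. Then, for any $a>2$, one has $\sigma(t)=\omega(t^{1/a})=a^{-p}\omega(t)$, a fixed positive multiple of $\omega$. A short computation with the Young conjugate gives $\varphi_\sigma^*(s)=a^{-p}\varphi_\omega^*(a^p s)$, whence $\lambda\varphi_\sigma^*(\frac{n+q}{\lambda})=\mu\varphi_\omega^*(\frac{n+q}{\mu})$ with $\mu=\lambda a^{-p}$; therefore $\|f\|_{\sigma,\lambda}=\|f\|_{\omega,\mu}$ and the two seminorm families coincide after reparametrizing $\lambda$. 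Consequently $\mathcal{S}_\sigma(\mathbb{R})=\mathcal{S}_\omega(\mathbb{R})$ as Fréchet spaces, independently of the choice of $a$, and in particular $C_\psi$ maps $\mathcal{S}_\omega(\mathbb{R})$ continuously into itself.

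With this identification I would fix any $a>2$ and translate the two dynamical notions into the language of the earlier theorems. Power boundedness of $C_\psi$ on $\mathcal{S}_\omega(\mathbb{R})$ means exactly that $(C_\psi^m)_m=(C_{\psi_m})_m$ is equicontinuous in $\mathcal{L}(\mathcal{S}_\omega(\mathbb{R}),\mathcal{S}_\omega(\mathbb{R}))=\mathcal{L}(\mathcal{S}_\omega(\mathbb{R}),\mathcal{S}_\sigma(\mathbb{R}))$, which is statement (1) of Theorem \ref{teo:power-bounded}; hence the equivalence (1)$\Leftrightarrow$(2) of the corollary follows from (1)$\Leftrightarrow$(3) there. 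Similarly, mean ergodicity of $C_\psi$ is precisely the existence of $\lim_n\frac{1}{n}\sum_{m=1}^n f\circ\psi_m$ in $\mathcal{S}_\omega(\mathbb{R})=\mathcal{S}_\sigma(\mathbb{R})$ for every $f$, i.e. statement (3) of Proposition \ref{prop:mean-ergodic}; so (3)$\Leftrightarrow$(2) of the corollary follows from (1)$\Leftrightarrow$(3) there.

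There is no genuine obstacle here: the corollary is a direct specialization. The only point requiring care is the verification that $\sigma$ and $\omega$ generate the same space with the same topology, so that ``power bounded / mean ergodic as an operator $\mathcal{S}_\omega(\mathbb{R})\to\mathcal{S}_\omega(\mathbb{R})$'' coincides verbatim with the equicontinuity and averaged-convergence conditions phrased for the pair $(\mathcal{S}_\omega(\mathbb{R}),\mathcal{S}_\sigma(\mathbb{R}))$ in the preceding results. Once that identification is in place, the three equivalences are read off directly from conditions (1) and (3) of Theorem \ref{teo:power-bounded} and conditions (1) and (3) of Proposition \ref{prop:mean-ergodic}.
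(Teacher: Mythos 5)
Correct, and essentially identical to the paper's own justification: the paper obtains the corollary from Theorem \ref{teo:power-bounded} and Proposition \ref{prop:mean-ergodic} via precisely the observation that $\sigma(t)=\omega(t^{1/a})=a^{-p}\omega(t)$, hence ${\mathcal S}_\sigma({\mathbb R})={\mathcal S}_\omega({\mathbb R})$, and your Young-conjugate computation merely makes that identification explicit. One small imprecision worth noting: $\max\{0,\log^p x\}$ is not literally subadditive (it vanishes at $1$ but not at $2$), only equivalent to a subadditive weight; this suffices, since equivalent weights give the same space with the same topology, so the cited results still apply, and the paper glosses over this point in the same way.
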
 		

\section{Spectrum and Neumann series}\label{sec:spectrum}

For the spectral theory of linear operators not necessarily everywhere defined on non-normable spaces we follow \cite{ABR}. We will always work with Fr\'echet spaces, hence we will give the definitions in this setting. We will write $\lambda-T$ as a shorthand for $\lambda I - T,$ where $I$ denotes the identity operator.

\begin{definition}
Let $E$ be a Fréchet space and $T:D(T)\subset E \to E$ a linear operator. The resolvent set of $T$ is
$$
\rho(T) = \left\{\lambda\in {\mathbb C}:\ \lambda-T:D(T)\to E\ \mbox{is bijective and}\ (\lambda-T)^{-1}\in {\mathcal L}(E)\right\},$$ and the spectrum of $T$ is defined by $\sigma(T)={\mathbb C}\setminus \rho(T).$

$\rho^\ast(T)$ consists of those $\lambda\in \rho(T)$ for which there exists $V(\lambda)$ open neighbourhood of $\lambda,$ $V(\lambda)\subset \rho(T),$ such that $\left\{(\mu-T)^{-1}:\mu\in V(\lambda)\right\}$ is equicontinuous in ${\mathcal L}(E).$

We put $\sigma^\ast(T) = {\mathbb C}\setminus \rho^\ast(T),$ and we call it the Waelbroeck spectrum by similarity with the case in which $T\in {\mathcal L}(T).$

\end{definition}
\par\medskip
According to \cite[Remark 3.1]{ABR}, if $\rho(T) \neq \emptyset$ then $T$ is a closed operator, that is, if $(x_n)_n\subset D(T)$ converges to $x$ in $E$ and $(Tx_n)_n$ converges to $y,$ then $x\in D(T)$ and $y=Tx.$  Then, when  $\rho^\ast(T)\neq \emptyset ,$ \cite[Proposition 3.4]{ABR} can be applied and    the map $\rho^\ast(T)\to {\mathcal L}_b(E), \lambda\mapsto (\lambda-T)^{-1},$ is holomorphic.

\begin{proposition}\label{Walbroeck} Let $E$ and $F$ be Fr\'echet spaces with $E$ continuously included in $F.$ Given  $T\in L(F),$ we put  $D(A):=\{x\in E: Tx \in E\},$ and $A=T_{|_{D(A)}}:D(A)\to E.$  If $\sigma^\ast(A)$ is compact and there is  $R>0$   such that the series $\displaystyle \sum_{m=0}^\infty\frac{T^mx}{\mu^{m+1}}$ converges in $F$ for every $x\in F$ and $|\mu|>R,$ then $D(A)=E.$

\end{proposition}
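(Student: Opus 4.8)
The plan is to show directly that $Tx\in E$ for every $x\in E$, which is precisely the assertion $D(A)=E.$ First I would exploit the Neumann series hypothesis. For $|\mu|>R$ the telescoping identity $(\mu-T)\sum_{m=0}^{N}\tfrac{T^m}{\mu^{m+1}}=I-\tfrac{T^{N+1}}{\mu^{N+1}},$ together with the fact that the terms of a convergent series tend to $0,$ shows that $S_\mu x:=\sum_{m=0}^\infty \tfrac{T^m x}{\mu^{m+1}}$ is a two-sided inverse of $\mu-T$ on $F$; in particular $\mu-T$ is injective on $F.$ Consequently, for $\lambda\in\rho(A)$ with $|\lambda|>R$ and $x\in E,$ writing $y:=(\lambda-A)^{-1}x\in D(A)\subset E\subset F$ we have $(\lambda-T)y=(\lambda-A)y=x,$ and applying the left inverse $S_\lambda$ gives $(\lambda-A)^{-1}x=S_\lambda x=\sum_{m=0}^\infty \tfrac{T^m x}{\lambda^{m+1}},$ an identity in $F$ whose left-hand side lies in $E.$

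Next I would use the compactness of $\sigma^\ast(A).$ This gives $r_0>0$ with $\{|\lambda|>r_0\}\subset\rho^\ast(A),$ so $\rho^\ast(A)$ is a neighbourhood of infinity and, by the holomorphy of the resolvent recalled before the statement, $\Phi:\lambda\mapsto(\lambda-A)^{-1}$ is holomorphic from $\{|\lambda|>r_0\}$ into $\mathcal L_b(E).$ Fixing $x\in E,$ the map $\lambda\mapsto\Phi(\lambda)x$ is then holomorphic, hence continuous, into $E.$ The idea is to recover $Tx$ as a Laurent coefficient: choosing any $r>\max(R,r_0)$ and integrating over the circle $|\lambda|=r,$ I claim that
\[
\frac{1}{2\pi i}\int_{|\lambda|=r}\lambda\,\Phi(\lambda)x\,d\lambda=Tx.
\]
Indeed, on $|\lambda|=r$ the $F$-valued series $\lambda S_\lambda x=\sum_{m=0}^\infty T^m x\,\lambda^{-m}$ converges uniformly (being a power series in $1/\lambda$ convergent for all $|\lambda|>R$), so term-by-term integration is legitimate in $F$ and only the $m=1$ term survives, yielding $Tx.$

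Finally I would observe that the same contour integral can be computed in $E.$ The integrand $\lambda\mapsto\lambda\,\Phi(\lambda)x$ is a continuous $E$-valued function on the compact contour, so its Riemann sums form a Cauchy net in the complete space $E$ and the integral defines an element $w\in E.$ Since the inclusion $\iota:E\hookrightarrow F$ is continuous and linear it commutes with the integral, whence $\iota(w)$ equals the $F$-valued integral, namely $Tx$; as $\iota$ is injective this forces $Tx=w\in E.$ Because $x\in E$ was arbitrary we conclude $Tx\in E$ for all $x\in E,$ i.e. $D(A)=E.$ The main obstacle, and the place where both hypotheses are genuinely used, is this last compatibility step: the Neumann series hypothesis is what makes the $F$-integral computable and equal to $Tx,$ while the compactness of $\sigma^\ast(A)$ is exactly what guarantees that $\Phi$ is $\mathcal L_b(E)$-holomorphic on a full neighbourhood of infinity, so that the very same integrand is $E$-valued and continuous and its integral lands in $E$ rather than merely in $F.$
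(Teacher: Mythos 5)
Your proposal is correct and follows essentially the same route as the paper: pick a circle $|\lambda|=r$ with $r>R$ lying in $\rho^\ast(A)$ (possible by compactness of $\sigma^\ast(A)$), note the integral $\frac{1}{2\pi i}\int_{|\lambda|=r}\lambda(\lambda-A)^{-1}x\,d\lambda$ exists as an element of $E$, and compute it in $F$ term by term via the Neumann series, where only the $m=1$ term survives and yields $Tx$. The only difference is one of detail: you explicitly justify, via the telescoping identity and the left inverse $S_\lambda$, why $(\lambda-A)^{-1}x$ coincides with the $F$-valued Neumann series, a step the paper states without proof.
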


\begin{proof} The hypothesis imply that $(\rho ^m T^mx)_m$ is bounded in $F$ for every $x\in F$ and $|\rho |<\frac{1}{R}.$ Hence, $\displaystyle \sum_{m=0}^\infty\frac{T^mx}{\mu^{m+1}}$ is absolutely convergent  in $F$ for every $x\in F$ and $|\mu |>R.$ From the compactness of $\sigma^\ast(A)$  we can find $r > R$ such that $\left\{\mu\in {\mathbb C}:\ |\mu| = r\right\}\subset \rho^\ast(A).$ Then the map
$$
\{|\mu |=r \}\to L(E), \, \mu \mapsto (\mu -A)^{-1}$$ is continuous and, for each $y\in E,$ the integral $\displaystyle\frac{1}{2\pi i}\int_{|\mu |=r}\mu (\mu-A)^{-1}y\ d\mu$ defines an element of $E.$ On the other hand, with convergence in $F,$ $$\mu (\mu-A)^{-1}y=\sum_{m=0}^\infty\frac{T^my}{\mu^{m}},$$ therefore $$\frac{1}{2\pi i}\int_{|\mu |=r}\mu (\mu-A)^{-1}y\ d\mu=\sum_{m=0}^\infty \frac{T^my}{2\pi i}\int_{|\mu |=r}\frac{1}{\mu^{m}}\  d\mu=Ty.$$
This finishes the proof.

\end{proof}

By \cite{advances}, given a polynomial $\psi$ of degree greater than one and without fixed points, $\sigma_{{\mathcal S}({\mathbb R})}(C_\psi)=\{0\},$ that is, for $\mu \neq 0,$ $(\mu -C_\psi )$ is a topological isomorphism onto ${\mathcal S}({\mathbb R}).$ Besides, $$(\mu -C_\psi )^{-1}f=\sum_{m=0}^\infty \frac{C_{\psi_m}f}{\mu^{m+1}},$$ with convergence in ${\mathcal S}({\mathbb R}).$ Using the estimates in Proposition \ref{prop:iterativepoly}, i\textcolor{red}{t} is easy to check that $\rho(C_\psi)=\rho^\ast(C_\psi) = {\mathbb C}\setminus\{0\}$ and, consequently, its Waelbroeck spectrum reduces to $\{0\}.$  Now, our next result follows immediately from Proposition \ref{Walbroeck} and means that, concerning the Waelbroeck spectrum, the behaviour of $C_\psi$ in Gelfand-Shilov  classes or in the Schwartz class  might be very different.

\begin{theorem} Let $\psi$ be a polynomial of degree greater than one and without fixed points. For $d>2,$ take  $E=\Sigma_d({\mathbb R}),$ $F={\mathcal S}({\mathbb R})$  and $T=C_\psi.$  With the notation of Proposition \ref{Walbroeck}, $\sigma^\ast(A)$ is not compact.

\end{theorem}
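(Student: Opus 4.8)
The plan is to deduce the result by contradiction from Proposition \ref{Walbroeck}, which is tailored precisely for this purpose. Assume, contrary to the claim, that $\sigma^\ast(A)$ is compact. Recall that here $T = C_\psi$ acts continuously on $F = {\mathcal S}(\mathbb{R})$ (since $\psi$ is a non-constant polynomial), that $E = \Sigma_d(\mathbb{R})$ is continuously included in $F$, and that $D(A) = \{f \in \Sigma_d(\mathbb{R}):\ f\circ\psi \in \Sigma_d(\mathbb{R})\}$. In particular, the conclusion $D(A) = E$ of Proposition \ref{Walbroeck} is nothing but the invariance $C_\psi\left(\Sigma_d(\mathbb{R})\right) \subset \Sigma_d(\mathbb{R})$. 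So the whole strategy is to show that compactness of $\sigma^\ast(A)$ would force this invariance, which is known to fail.

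The next step is to verify the Neumann series hypothesis of Proposition \ref{Walbroeck}. Since $\psi$ has degree greater than one and lacks fixed points, the result of \cite{advances} recalled just before the theorem gives $\sigma_{{\mathcal S}(\mathbb{R})}(C_\psi) = \{0\}$ together with the expansion $\left(\mu - C_\psi\right)^{-1}f = \sum_{m=0}^\infty \frac{C_{\psi_m}f}{\mu^{m+1}}$, convergent in $F = {\mathcal S}(\mathbb{R})$ for every $f\in F$ and every $\mu \neq 0$. Because $T^m f = C_\psi^m f = C_{\psi_m}f$, this is exactly the series $\sum_{m=0}^\infty \frac{T^m f}{\mu^{m+1}}$ appearing in Proposition \ref{Walbroeck}, and its convergence for all $\mu \neq 0$ means that the hypothesis holds with any $R > 0$, say $R = 1$.

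With both hypotheses of Proposition \ref{Walbroeck} available, the proposition forces $D(A) = E$, that is, $C_\psi$ leaves $\Sigma_d(\mathbb{R})$ invariant. This contradicts \cite[Corollary 3.8]{jmaa}, according to which $C_\psi$ does not leave $\Sigma_d(\mathbb{R})$ invariant whenever $\psi$ has degree greater than one. Hence the assumption was untenable and $\sigma^\ast(A)$ cannot be compact. The argument is essentially structural rather than computational: the two analytic inputs—the explicit Neumann expansion of the resolvent in ${\mathcal S}(\mathbb{R})$ from \cite{advances} and the failure of composition invariance in $\Sigma_d(\mathbb{R})$ from \cite[Corollary 3.8]{jmaa}—are already in place. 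The only point that demands attention, and which I expect to be the sole (mild) obstacle, is confirming that the convergence supplied by \cite{advances} matches verbatim the form of the series required in Proposition \ref{Walbroeck} and that the identification $D(A) = E \Leftrightarrow C_\psi(\Sigma_d(\mathbb{R})) \subset \Sigma_d(\mathbb{R})$ is correct; both checks are immediate, so there is no genuine difficulty beyond aligning the cited statements with the framework of Proposition \ref{Walbroeck}.
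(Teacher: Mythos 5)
Your proof is correct and coincides with the paper's own (implicit) argument: the paper deduces the theorem ``immediately from Proposition \ref{Walbroeck}'' via exactly this contradiction, combining the convergence of the Neumann series $\sum_{m=0}^\infty \mu^{-(m+1)}C_{\psi_m}f$ in ${\mathcal S}({\mathbb R})$ from \cite{advances} with the failure of the invariance $C_\psi\left(\Sigma_d({\mathbb R})\right)\subset \Sigma_d({\mathbb R})$ from \cite[Corollary 3.8]{jmaa}. Nothing is missing.
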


We observe that $D(A)$ is a dense subspace of $\Sigma_d({\mathbb R})$ as it contains $\Sigma_{d/2}({\mathbb R}).$
\par\medskip 

 Let us write $R_\mu=(\mu -C_\psi )^{-1}.$  Given a  subadditive weight $\omega,$ by Theorem \ref{teo:power-bounded}, $(C_{\psi_m})_m $ is equicontinuous in ${\mathcal L}( {\mathcal S_\omega}({\mathbb R}) ,  {\mathcal S_\sigma}({\mathbb R})),$ where $\sigma(t)= \omega(t^{\frac{1}{a}})$ for $a>2.$ Hence, it is natural to investigate whether $R_\mu \in  {\mathcal L}( {\mathcal S_\omega}({\mathbb R}) ,  {\mathcal S_\sigma}({\mathbb R})),$ that is, if the regularity of the solution $g$ of the equation $\mu g-C_\psi g = f$ depends on the regularity of the datum $f.$  This is the content of the next result.

\begin{proposition}\label{prop:resolvente-continua} Let $\omega$ be any subadditive weight and $\sigma(t)= \omega(t^{\frac{1}{a}})$ for $a>2.$ Given a polynomial $\psi$ of degree greater than one and without fixed points and $\mu \neq 0,$ we have that $R_\mu \in  {\mathcal L}( {\mathcal S_\omega}({\mathbb R}) ,  {\mathcal S_\sigma}({\mathbb R})).$
\end{proposition}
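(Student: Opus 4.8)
The plan is to show that the Neumann series for $R_\mu$ converges in the strong operator topology of $\mathcal{L}(\mathcal{S}_\omega(\mathbb{R}),\mathcal{S}_\sigma(\mathbb{R}))$, and then invoke completeness. Recall that for $\mu\neq 0$ we already know from \cite{advances} that $R_\mu f = \sum_{m=0}^\infty \mu^{-(m+1)} C_{\psi_m} f$ with convergence in $\mathcal{S}(\mathbb{R})$. The first step is therefore to promote this convergence to convergence in the finer space $\mathcal{S}_\sigma(\mathbb{R})$. Since $\mathcal{S}_\sigma(\mathbb{R})$ is a Fréchet space, it suffices to show that the partial sums form a Cauchy sequence with respect to each seminorm $\|\cdot\|_{\sigma,\lambda}$, and for this I would obtain a decay estimate on $\|C_{\psi_m}f\|_{\sigma,\lambda}$ that is summable against $|\mu|^{-(m+1)}$.

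The key input is Proposition \ref{prop:iterativepoly}: since $\psi$ lacks fixed points and has even degree $2p$ (after passing to a linearly equivalent monic polynomial), for each $b>1$ there is $m_0$ with $|\psi_{m_0+k}(x)|\geq b^{2^k}$ for all $x$ and all $k$. The second step is to feed this together with the uniform bound from Theorem \ref{teo:power-bounded}, namely the equicontinuity of $(C_{\psi_m})_m$ in $\mathcal{L}(\mathcal{S}_\omega(\mathbb{R}),\mathcal{S}_\sigma(\mathbb{R}))$, into an estimate of the form $\|C_{\psi_m}f\|_{\sigma,\lambda}$. Equicontinuity already gives, for each $\lambda$, a constant $\mu_0$ and an index $\lambda'$ such that $\|C_{\psi_m}f\|_{\sigma,\lambda}\leq \mu_0\,\|f\|_{\omega,\lambda'}$ uniformly in $m$; this shows the terms are bounded but not yet that they decay. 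To get genuine decay I would exploit the weight factors $(1+|x|)^q$ built into the seminorm: writing $g=C_{\psi_m}f$, for any fixed $q$ the quantity $(1+|x|)^q|g^{(n)}(x)|$ can be estimated by factoring out $(1+|\psi_m(x)|)^{-N}$ for a large $N$ and using that $f\in\mathcal{S}_\omega(\mathbb{R})$ decays faster than any polynomial in its argument $\psi_m(x)$; combined with $|\psi_m(x)|\geq b^{2^{m-m_0}}$ this produces a factor decaying doubly-exponentially in $m$, which easily beats the geometric growth $|\mu|^{-(m+1)}$.

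Concretely, I expect to prove a bound of the shape $\|C_{\psi_m}f\|_{\sigma,\lambda}\leq C_\lambda\,\rho^{-2^m}$ for some $\rho>1$ depending on $\mu$ (after first absorbing the finitely many indices $m< m_0$, which are harmless), so that $\sum_m |\mu|^{-(m+1)}\|C_{\psi_m}f\|_{\sigma,\lambda}<\infty$ for every $\lambda$. This makes the partial sums Cauchy in every seminorm, hence convergent in $\mathcal{S}_\sigma(\mathbb{R})$, and identifies the limit with $R_\mu f$ by comparison with the already-established $\mathcal{S}(\mathbb{R})$-convergence. The third step is to conclude that $R_\mu:\mathcal{S}_\omega(\mathbb{R})\to\mathcal{S}_\sigma(\mathbb{R})$ is continuous: the estimate above is uniform over $f$ in a bounded set (the constant $C_\lambda$ will be a seminorm of $f$ in $\mathcal{S}_\omega(\mathbb{R})$), so $R_\mu$ maps bounded sets to bounded sets and, being a pointwise limit of continuous linear maps between Fréchet spaces, continuity follows from the Banach–Steinhaus theorem exactly as in the proof of Proposition \ref{conditionpowerboundedness}.

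The main obstacle will be the second step: converting the merely uniform (equicontinuity) bound into an effective decay estimate in $m$. The delicate point is matching the weight $\sigma(t)=\omega(t^{1/a})$ on the target side with the super-exponential lower bound $|\psi_m(x)|\geq b^{2^{m-m_0}}$, and verifying that the doubly-exponential growth of $|\psi_m|$ translates, through the Young conjugate $\varphi_\sigma^*$ and the $(1+|x|)^q$ factors, into a genuine decay of $\|C_{\psi_m}f\|_{\sigma,\lambda}$ rather than just boundedness. The subadditivity of $\omega$ (equivalently the identity $\varphi_\sigma^*(s)=\varphi_\omega^*(as)$ used in Theorem \ref{teo:power-bounded}) will be exactly what is needed to control the composition $\omega\big(\,|\psi_m(x)|\,\big)$ against $\omega(|x|)$, and the requirement $a>2$ should enter precisely here to guarantee that the loss incurred by the chain rule and the polynomial growth of the derivatives of $\psi_m$ is outweighed by the gain from the lower bound on $|\psi_m|$.
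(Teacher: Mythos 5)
Your proposal is correct and follows essentially the same route as the paper's proof: the doubly-exponential lower bound $|\psi_{m_0+k}(x)|\geq b^{2^k}$ from Proposition \ref{prop:iterativepoly}, the uniform-in-$m$ seminorm estimates for $(1+|x|)^q|(f\circ\psi_m)^{(n)}(x)|$ coming from the proof of Theorem \ref{teo:power-bounded} (condition (ii) of Proposition \ref{conditionpowerboundedness} together with the argument of Proposition 4.1 in \cite{jmaa}), the trick of spending a power of $(1+|\psi_m(x)|)$ against that lower bound to turn mere boundedness into decay of $\|C_{\psi_m}f\|_{\sigma,\lambda}$, and finally Banach--Steinhaus plus the identity $\mu R_\mu f=\sum_m \mu^{-m}C_{\psi_m}f$ already known in $\mathcal{S}(\mathbb{R})$. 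The only cosmetic difference is that the paper needs just one extra power of $(1+|\psi_m(x)|)$ (your "$N$ large" is $N=1$), compensating the resulting shift $n+q\mapsto n+q+1$ in the Young conjugate by convexity of $\varphi_\sigma^\ast$.
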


\begin{proof} Given $f \in {\mathcal S}_{\omega}({\mathbb R})$ and $\mu \neq 0,$ and $\lambda >0$ we will show that the series $\displaystyle \sum_{m=0
}^\infty \frac{\|C_{\psi_m}f\|_{\sigma,\lambda}}{\mu^m}$ converges.  Then, the result will follow by Banach-Steinhaus theorem and the fact that $\mu R_\mu f = \sum_{m=0
}^\infty \frac{C_{\psi_m}f}{\mu^m}.$

Let $b>1.$ By Proposition \ref{prop:iterativepoly}, there is $m_0\in\mathbb{N}$ such that \begin{equation*}
		|\psi_{m_0+k}(x)|\geq b^{2^k}
	\end{equation*} for all $x\in\mathbb{R}$ and for all $k\in\mathbb{N}$.
	
	Proceeding as in the proof of (3)$\Rightarrow $(1) of Theorem \ref{teo:power-bounded} to check that condition (ii) in Proposition \ref{conditionpowerboundedness} holds, we have that for all $\rho>0$ there is $C_\rho>0$ such that \begin{equation*}
	|\psi^{(n)}_m(x)|\leq C_\rho \hspace{0.1cm} \exp(\rho \varphi_\sigma^*(\frac{n}{\rho}))\hspace{0.1cm}(1+|\psi_m(x)|)^p
\end{equation*} for all $n\in\mathbb{N}$, $x\in\mathbb{R}$, $m\in\mathbb{N}$, where $p=a-1$. 
Proceeding as in the proof of Proposition 4.1 in \cite{jmaa} we obtain that given $\rho>0$ there exist positive numbers  $\rho'$ and $C$ such that $$(1+|\psi_m(x)|)^q|(f\circ \psi_m)^{(n)}(x)|\leq C \|f\|_{\omega,\rho'}\exp\left(\rho\varphi^\ast_\sigma(\frac{n+q}{\rho})\right),$$
	for all $x\in {\mathbb R},$ $n\in {\mathbb N},$ $\in{\mathbb N}$, $q\in {\mathbb N}_0$. Moreover, there is $C_0>0$ such that $1+|x|\leq C_0 (1+|\psi_m(x)|)$ for all $x\in\mathbb{R}$, $m\in\mathbb{N}$.
Hence, if $m=m_0+k,$
$$
\begin{array}{*2{>{\displaystyle}l}} b^{2^k}(1+|x|)^q |(f\circ \psi_m)^{(n)}(x)| & \leq  C_0^q (1+|\psi_m(x)|)^{q+1}|(f\circ \psi_m)^{(n)}(x)| \\ & \\ & \leq C_0^q C \|f\|_{\omega,\rho'}\exp\left(\rho\varphi^\ast_\sigma(\frac{n+q+1}{\rho})\right).\end{array}$$

Given $\lambda >0$ we choose $\rho>0$ and $C'>0$ such that $$ C_0^q C  \exp\left(\rho\varphi^\ast_\sigma(\frac{n+q+1}{\rho})\right) \leq C' \exp\left(\lambda \varphi^\ast_\sigma(\frac{n+q}{\lambda})\right), \forall x\in {\mathbb R},\,  \forall n, q\in {\mathbb N}_0.$$ This is possible by the convexity of $\varphi^\ast_\sigma$ and Lemma 3.3 \cite{jmaa}.
Then, for $m=m_0+k,$ $$\|C_{\psi_m}
(f)\|_{\sigma,\lambda}\leq \frac{1}{b^{2^{m-m_0}}} \hspace{0.05cm}C' \|f\|_{\omega, \rho'},$$  from where the convergence of $\displaystyle \sum_{m=0
}^\infty \frac{\|C_{\psi_m}f\|_{\sigma,\lambda}}{\mu^m}$ follows.

\end{proof}

For polynomials of degree greater than one and having fixed points, arguing as in \cite[Theorems 2.8,  2.10]{advances} we have
\begin{proposition} Let $\omega$ be a subadditive weight and $\psi$  a polynomial of degree greater than one and having fixed points. Then, for every $0<|\mu|\leq 1,$ there is $f \in {\mathcal S}_\omega({\mathbb R})$ such that no $g\in {\mathcal S}({\mathbb R})$ satisfies $\mu g - C_\psi g=f.$  If in addition $\psi$ has a fixed point $a$ such that $\psi'(a)>1$ and $\psi^{(n)}(a)\geq 0$ for all $n\geq 2,$ then, for every $\mu \neq 0$ there is $f\in {\mathcal S}_\omega({\mathbb R})$ such that the equation $\mu g - C_\psi g=f$ has no solution in ${\mathcal S}({\mathbb R}).$

\end{proposition}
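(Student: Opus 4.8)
The plan is to argue by contradiction: assuming a solution $g\in{\mathcal S}({\mathbb R})$ of $\mu g-g\circ\psi=f$ exists, I would show that on a suitable region $g$ is \emph{completely determined} by a Neumann-type series in $f$, and then choose $f\in{\mathcal S}_\omega({\mathbb R})$ so that this series (or one of its derivatives) fails to extend continuously to a boundary fixed point, contradicting $g\in C^\infty$. Using the linear equivalence discussed before Proposition \ref{prop:iterativepoly} one may reduce to a convenient normal form (monic of even degree): composing the equation with an affine map $\ell$ turns it into the analogous equation for the conjugate $\ell\circ\psi\circ\ell^{-1}$, both ${\mathcal S}_\omega({\mathbb R})$ and ${\mathcal S}({\mathbb R})$ are stable under affine changes of variable, and the multiplier $\psi'(a)$ at a fixed point is a conjugacy invariant. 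For the first assertion I would work at the largest real fixed point $a$, where necessarily $\gamma:=\psi'(a)\ge 1$; for the second assertion $a$ is the given fixed point with $\gamma>1$.

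First I would establish the \emph{escaping identity}. Iterating $\mu g=f+g\circ\psi$ gives, for every $M$,
\begin{equation*}
g(x)=\sum_{m=0}^{M}\frac{f(\psi_m(x))}{\mu^{m+1}}+\frac{g(\psi_{M+1}(x))}{\mu^{M+1}}.
\end{equation*}
For each fixed $x>a$ the orbit $\psi_m(x)$ increases to $+\infty$, ultimately at a doubly exponential rate since $\deg\psi\ge 2$, so the rapid decay of the Schwartz function $g$ dominates the merely exponential factor $\mu^{-(M+1)}$ for \emph{any} $\mu\ne 0$; hence the remainder tends to $0$ and $g(x)=G(x):=\sum_{m\ge 0}\mu^{-(m+1)}f(\psi_m(x))$ for all $x>a$. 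The crucial consequence is that $g$ on the escaping region is forced to coincide with the explicit function $G$, which depends on $f$ alone.

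Next I would examine the boundary behaviour of $G$ as $x\to a^{+}$. The mechanism is that for $x=a+\varepsilon$ the iterates $\psi_m(x)$ linger near $a$ for about $M(\varepsilon)\approx\log_\gamma(1/\varepsilon)$ steps before escaping, and during this lingering $\frac{d^n}{dx^n}f(\psi_m(x))\big|_{x\approx a}$ is governed by the linearized growth $(\psi_m)^{(j)}(a)\sim c_j\,\gamma^{jm}$, obtained from the recursion $\psi_{m+1}=\psi\circ\psi_m$. A short Fa\`a di Bruno computation then shows that $G^{(n)}(a+\varepsilon)$ behaves like a \emph{nonnegative} combination of the $f^{(k)}(a)$ times $\sum_{m=0}^{M(\varepsilon)}(\gamma^{n}/\mu)^m$. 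Choosing $f\in{\mathcal S}_\omega({\mathbb R})$ with $f^{(k)}(a)\ge 0$ and $f^{(n)}(a)>0$ for the relevant order $n$, this partial sum diverges (or merely oscillates without limit) precisely when $|\mu|\le\gamma^{n}$, so $G^{(n)}(x)$ cannot converge as $x\to a^{+}$, contradicting continuity of $g^{(n)}$ at $a$. For the first statement I take $n=0$ with $\gamma\ge 1$, which covers all $0<|\mu|\le 1$; for the second I use $\gamma>1$ and pick $n$ with $\gamma^{n}\ge|\mu|$, covering every $\mu\ne 0$. The hypothesis $\psi^{(j)}(a)\ge 0$ is exactly what forces all Bell–polynomial coefficients to be nonnegative, so that no cancellation can spoil the divergence.

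The main obstacle will be the rigorous control of the last step: one must expand $\frac{d^n}{dx^n}f(\psi_m(x))$ uniformly for $x$ near $a$ and $m\le M(\varepsilon)$, verify that \emph{every} term of the Fa\`a di Bruno formula carries the same leading growth $\gamma^{nm}$ (which follows from $(\psi_m)^{(j)}(a)\sim c_j\gamma^{jm}$), and then handle the finitely many transitional indices $m\approx M(\varepsilon)$, at which the orbit leaves the neighbourhood of $a$, so that the genuine divergence of the dominant geometric sum is not masked by the escaped tail. This is precisely where the sign condition $\psi^{(j)}(a)\ge 0$ and the expansion $\gamma>1$ do the real work, and where I would adapt the computations of \cite[Theorems 2.8, 2.10]{advances} to the present ${\mathcal S}_\omega({\mathbb R})$–${\mathcal S}({\mathbb R})$ setting.
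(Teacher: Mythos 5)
Your first step (the ``escaping identity'' $g=\sum_m \mu^{-(m+1)} f\circ\psi_m$ on points whose orbits escape, valid for \emph{every} $\mu\neq 0$ because the doubly exponential escape beats any geometric factor) is exactly the starting point of the argument the paper invokes from \cite[Theorems 2.8, 2.10]{advances}, and it is correct. But from there the paper's route is structurally different and this difference is not cosmetic. The paper takes a \emph{backward} orbit $x_k\downarrow a$, $\psi(x_{k+1})=x_k$, and chooses $f$ with a prescribed jet at $x_0$ but vanishing to infinite order at \emph{every other point} of the full orbit (e.g.\ $f$ supported in $(x_1,\psi(x_0))$, exactly as in the proof of Theorem \ref{th:neumann-gevrey-secondorder}). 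Then the infinite series collapses to a single term: $g^{(n)}(x_k)=f^{(n)}(x_0)\left(\psi_k'(x_k)\right)^n\mu^{-(k+1)}$, an exact identity with no remainder, and the contradiction with continuity of $g^{(n)}$ at $a$ is immediate (non-convergence of $\mu^{-(k+1)}$ for $|\mu|\le 1$, $\mu\neq 1$; divergence when $\psi'(a)^n>|\mu|$, the sign hypotheses giving $\psi'(x_j)\ge\psi'(a)$; the case $\mu=1$ is even simpler, since evaluating the equation at the fixed point gives $(\mu-1)g(a)=f(a)$, so any $f$ with $f(a)\neq 0$ works).

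Your version instead charges $f$ at the fixed point itself and tries to extract divergence from the ``lingering'' segment of the forward orbit, and the step you yourself flag as ``the main obstacle'' is a genuine, and in places unfixable, gap. Concretely: (i) for part (1) you must treat $|\mu|=1$, $\mu\neq 1$, where your dominant sum $\sum_{m\le M(\varepsilon)}\mu^{-(m+1)}$ does \emph{not} diverge --- it is bounded and merely oscillates --- so no contradiction with continuity of $g$ follows unless you prove the \emph{total} $G(a+\varepsilon)$ has no limit, and the terms you discard are of exactly the same order as the oscillation. (ii) The ``transitional'' terms, where the orbit exits the neighbourhood of $a$, are not a small nuisance: they are the \emph{largest-modulus} terms of the series (of size comparable to $|\mu|^{-M(\varepsilon)}$, resp.\ $(\gamma^n/|\mu|)^{M(\varepsilon)}$, i.e.\ the same order as your main term), their values depend on where the orbit lands in the exit interval, and positivity of $f$ and of the $\psi^{(j)}(a)$ aligns their phases with the main term only when $\mu$ is a positive real; for negative or complex $\mu$ cancellation cannot be excluded this way. (iii) In the reference example for part (1), $\psi(x)=x^2+\tfrac14$, the fixed point is parabolic, $\gamma=\psi'(a)=1$, so your quantitative scaffolding ($M(\varepsilon)\approx\log_\gamma(1/\varepsilon)$ and $(\psi_m)^{(j)}(a)\sim c_j\gamma^{jm}$) is vacuous or false there: the lingering time is algebraic in $1/\varepsilon$ and the derivatives $(\psi_m)^{(j)}(a)$ grow polynomially in $m$. (Also, in part (2) you need the strict inequality $\gamma^n>|\mu|$, not $\gamma^n\ge|\mu|$.) All of these difficulties are precisely what the paper's support trick eliminates: by killing every orbit term except one, there is nothing left to cancel.
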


Now we focus on polynomials $\psi$ of degree 2. In this case, the polynomial is linearly equivalent to $x^2+c.$ The parameter $c$ depends on the number of fixed points of the polynomial. In fact, $\psi$ has two different fixed points if and only if $c<\frac{1}{4},$ has no fixed points when $c>\frac{1}{4}$ and has a unique fixed point for $c=\frac{1}{4}.$ According to this, and by our previous results, when $c>\frac{1}{4},$ $R_\mu \in  {\mathcal L}( {\mathcal S_\omega}({\mathbb R}) ,  {\mathcal S_\sigma}({\mathbb R})),$ for every subadditive  weight and $\sigma(t)= \omega(t^{\frac{1}{a}})$ with $a>2,$ whereas for  $c<\frac{1}{4},$ we may find $f\in {\mathcal S}_\omega({\mathbb R})$ such that the equation $\mu g - C_\psi g=f$ has no solution in ${\mathcal S}({\mathbb R}).$ The case $c=\frac{1}{4}$ is more cumbersome. On one hand, for every subadditive weight $\omega$ and each $0<|\mu|\leq 1$ there is $f\in {\mathcal S}_\omega({\mathbb R})$ such that $f \notin (\mu - C_\psi)({\mathcal S}({\mathbb R})).$ On the other hand, for $|\mu |>1, $  $(\mu -C_\psi)^{-1} \in L({\mathcal S}({\mathbb R}))$ and the series $\displaystyle \sum_{m=0}^\infty \frac{C_{\psi_m}f}{\mu^{m+1}}$ converges in ${\mathcal S}({\mathbb R})$ for every $f \in {\mathcal S}({\mathbb R})$ \cite[Theorem 3.3]{advances}. Next we analyze, for $|\mu|>1,$ the range of $R_\mu:=(\mu -C_\psi)^{-1}$ restricted to spaces ${\mathcal S}_\omega({\mathbb R}).$

\begin{lemma}\label{lem:recurrence} Let $y_0\geq 4$ and $y_{n+1}=\sqrt{2y_n-1}.$ Then $$y_n \geq \left( \frac{n+2}{n+1}\right)^2.$$

\end{lemma}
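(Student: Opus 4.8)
The plan is to prove the estimate by induction on $n$, exploiting the monotonicity of the map that governs the recurrence. Write $a_n := \left(\frac{n+2}{n+1}\right)^2$ for the claimed lower bound and set $\phi(t) := \sqrt{2t-1}$, so that $y_{n+1} = \phi(y_n)$. The function $\phi$ is strictly increasing on $[\tfrac12,\infty)$ and has $1$ as its unique fixed point; since $a_n > 1$ for all $n$, the sequence $(a_n)$ decreases to $1$, so the statement is really a rate-of-convergence estimate for $y_n \to 1$.

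The base case is immediate, as $a_0 = 4 \leq y_0$ by hypothesis. For the inductive step I would assume $y_n \geq a_n$; because $a_n \geq 1 > \tfrac12$, both $2y_n - 1$ and $2a_n - 1$ are positive and $\phi$ is increasing there, whence
$$y_{n+1} = \phi(y_n) \geq \phi(a_n) = \sqrt{2a_n - 1}.$$
It therefore suffices to establish the purely algebraic inequality $\sqrt{2a_n - 1} \geq a_{n+1}$, equivalently $2a_n - 1 \geq a_{n+1}^2$ since both sides are nonnegative. This cleanly separates the dynamical part of the argument (monotonicity) from the computational part.

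To verify $2a_n - 1 \geq a_{n+1}^2$, I would substitute $m := n+1 \geq 1$, so that $a_n = \left(\frac{m+1}{m}\right)^2$ and $a_{n+1}^2 = \left(\frac{m+2}{m+1}\right)^4$, and clear denominators. Using $2(m+1)^2 - m^2 = m^2 + 4m + 2$, the inequality becomes
$$(m^2 + 4m + 2)(m+1)^4 \geq m^2 (m+2)^4.$$
Expanding both sides, everything through degree $m^4$ cancels and the difference collapses to
$$4m^3 + 13m^2 + 12m + 2,$$
a polynomial with nonnegative coefficients, hence strictly positive for $m \geq 1$. This closes the induction.

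The main obstacle is that the proposed bound is asymptotically sharp: as $n\to\infty$ one checks that $\phi(a_n)$ and $a_{n+1}$ agree to second order in $1/(n+1)$ (both equal $1 + \frac{2}{n+1} - \frac{1}{(n+1)^2} + O(n^{-3})$), so no crude estimate on the difference $\sqrt{2a_n-1} - a_{n+1}$ will work; only the exact cancellation exhibited after clearing denominators reveals the positive polynomial remainder. The one routine point to monitor is that all quantities remain in the domain where $\phi$ is defined and increasing, which is guaranteed by the uniform bound $a_n \geq 1$.
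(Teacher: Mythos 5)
Your proof is correct and is essentially the paper's own argument: the same induction, the same reduction to the algebraic inequality $2\left(\frac{m+1}{m}\right)^2-1\geq\left(\frac{m+2}{m+1}\right)^4$, and the same polynomial expansion (your difference $4m^3+13m^2+12m+2$ is exactly the gap between the two expanded sides the paper writes out). The only cosmetic differences are your explicit appeal to monotonicity of $t\mapsto\sqrt{2t-1}$, which the paper uses implicitly, and an index shift.
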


\begin{proof} We proceed by induction. By hypothesis $y_0\geq 4.$  Suppose that for some $n\geq 1,$   $y_{n-1}\geq \left(\frac{n+1}{n}\right)^2.$ We want to show that $$y_{n}^2=2y_{n-1}-1\geq   \left(\frac{n+2}{n+1}\right)^4.$$ To this end, it suffices to see  that $$2\left(\frac{n+1}{n}\right)^2-1\geq \left(\frac{n+2}{n+1}\right)^4,$$ which happens if and only if $$2(n+1)^6-n^2(n+1)^4\geq n^2(n+2)^4.$$ The left hand side is $$n^6+8n^5+24n^4+36n^3+29n^2+12n+2$$ whereas the right hand side coincides with $$n^6+8n^5+24n^4+32n^3+16n^2.$$ This finishes the proof.

\end{proof}

\begin{theorem}\label{th:neumann-gevrey-secondorder} Let $\psi(x)=x^2+\frac{1}{4},$ $|\mu |>1$ and $R_\mu=\sum_{m=0}^\infty \frac{C_{\psi_m}}{\mu^{m+1}}.$ Then
\begin{enumerate}
\item  Let $1 < d < 2$ be given. For every  strong weight $\omega$ there is $f\in {\mathcal S}_{\omega}({\mathbb R})$ such that $R_\mu f\notin \Sigma_d({\mathbb R}).$

\item Let $d, d' > 1$ be given such that $d' < d + 2.$ There is $f\in \Sigma_d({\mathbb R})$ such that $R_\mu f\notin \Sigma_{d'}({\mathbb R}).$

\end{enumerate}
\end{theorem}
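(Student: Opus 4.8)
The plan is to probe $R_\mu f$ through high‑order derivatives at points $x_m$ approaching the unique fixed point $1/2$ of $\psi(x)=x^2+\frac14$ along the backward orbit of the point $2$, where the parabolic slowdown forces the loss of two Gevrey orders. I set $x_0=2$ and $x_{m+1}=\sqrt{x_m-\frac14}$, so that $\psi_m(x_m)=2$; writing $y_m=2x_m$, the hypothesis $y_0=4$ of Lemma~\ref{lem:recurrence} is met and gives $2x_\ell\ge\big((\ell+2)/(\ell+1)\big)^2$. Since $\psi_i(x_m)=x_{m-i}$, a telescoping product then yields the crucial bound
\[
\lambda_m:=\psi_m'(x_m)=\prod_{i=0}^{m-1}2\,\psi_i(x_m)=\prod_{\ell=1}^{m}2x_\ell\ \ge\ \Big(\frac{m+2}{2}\Big)^2 ,
\]
so $\psi_m'(x_m)$ grows quadratically in $m$ even though $\psi_m(x_m)=2$ stays bounded. (Proposition~\ref{prop:upper_bound_derivatives} does not apply here, precisely because $\psi$ has a fixed point.)

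The decisive step is to arrange that only one term of the Neumann series, and only the top term of the Faà di Bruno expansion, survive, with a definite sign. I fix a cutoff $\chi$ in the relevant class ($\mathcal S_\omega$ in (1), $\Sigma_d$ in (2); compactly supported functions exist by non‑quasianalyticity) with $\chi\equiv1$ near $2$ and support so small that $\psi_j(x_m)\notin\mathrm{supp}\,\chi$ for every $j\neq m$; this holds because $\psi_j(x_m)=x_{m-j}\le x_1=\tfrac{\sqrt7}{2}$ for $j<m$ and $\psi_j(x_m)=\psi_{j-m}(2)\ge\tfrac{17}{4}$ for $j>m$. Then every summand of $R_\mu f=\sum_j\mu^{-(j+1)}C_{\psi_j}f$ with $j\neq m$ vanishes near $x_m$ together with all its derivatives, so $(R_\mu f)^{(n)}(x_m)=\mu^{-(m+1)}(f\circ\psi_m)^{(n)}(x_m)$. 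Since $x_m>\tfrac12$ and $\psi$ is absolutely monotonic on $[0,\infty)$ (all derivatives nonnegative there), each iterate $\psi_m$ is too, whence $\psi_m^{(\ell)}(x_m)\ge0$ and every incomplete Bell polynomial $B_{n,k}(\psi_m'(x_m),\dots)$ is nonnegative. Prescribing nonnegative Taylor data $f^{(k)}(2)\ge0$ at $2$ (possible since the weights involved are strong, so the Borel map is onto; one then multiplies by $\chi$, which does not alter the derivatives at $2$), the Faà di Bruno sum consists of nonnegative terms, giving
\[
(f\circ\psi_m)^{(n)}(x_m)=\sum_{k=1}^{n}f^{(k)}(2)\,B_{n,k}\big(\psi_m'(x_m),\dots\big)\ \ge\ f^{(n)}(2)\,\lambda_m^{\,n}.
\]
This diagonal domination, rather than any cancellation estimate, is what makes the bound valid for every complex $\mu$ with $|\mu|>1$.

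Now I optimize in $m$: from $\lambda_m\ge(m/2)^2$ and the choice $m\approx 2n/\log|\mu|$ one gets, by Stirling, a constant $C=C(|\mu|)>0$ with $\sup_m|\mu|^{-m}\lambda_m^{\,n}\ge C^n n!^{2}$, whence $\sup_{x\in\mathbb R}|(R_\mu f)^{(n)}(x)|\ge f^{(n)}(2)\,C^n n!^{2}$. For (1) I take $f^{(n)}(2)=1$ (admissible for any weight, since $\varphi_\omega^\ast\ge0$ for large argument): the derivatives of $R_\mu f$ then grow at least like $C^n n!^2$, which cannot satisfy the defining Beurling bound $|g^{(n)}|\le C_h\,h^n n!^{d}$ (valid for all $h>0$) when $d<2$, so $R_\mu f\notin\Sigma_d$. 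For (2) I take $f^{(n)}(2)=n!^{\,d}e^{-\varepsilon n\log n}$ with $0<\varepsilon<d+2-d'$; as $(f^{(n)}(2)/n!^{d})^{1/n}=n^{-\varepsilon}\to0$, this sequence lies in $\mathcal E_{(t^{1/d})}(\{0\})$, so $f\in\Sigma_d$, while $|(R_\mu f)^{(n)}(x_m)|\ge C^n n!^{\,d+2}e^{-\varepsilon n\log n}$; dividing by $h^n n!^{d'}$ leaves $(C/h)^n e^{(d+2-d'-\varepsilon)n\log n}\to\infty$, so $R_\mu f\notin\Sigma_{d'}$ exactly when $d'<d+2$.

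The main obstacle is conceptual rather than computational: evaluating at the fixed point $1/2$ itself (where $\psi_m'=1$) sees only Gevrey order $2$, through the recursion $(\mu-1)g^{(n)}(1/2)=f^{(n)}(1/2)+\sum_{n/2\le k<n}\frac{n!}{(2k-n)!(n-k)!}\,g^{(k)}(1/2)$, and is moreover obstructed by cancellation when $\mu$ is not real. Both difficulties are overcome by (i) passing to the backward orbit $x_m\to\tfrac12$, where Lemma~\ref{lem:recurrence} supplies the polynomial growth $\psi_m'(x_m)\ge((m+2)/2)^2$ responsible for the two extra Gevrey orders, and (ii) engineering $f$ (compact support together with nonnegative Taylor coefficients at $2$) so that a single Neumann term and the diagonal Faà di Bruno contribution survive, turning a delicate cancellation analysis into a one‑line positivity estimate uniform in $\arg\mu$. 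The remaining routine bookkeeping is to check that the chosen $f^{(n)}(2)$ is admissible in the source space yet large enough for the resulting $n!^{2}$ (resp.\ $n!^{d+2}$) growth to overtake the target scale, which is precisely what singles out the thresholds $d<2$ and $d'<d+2$.
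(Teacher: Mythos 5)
Your proof is correct, and although it rests on the same geometric core as the paper's, the implementation is genuinely different. Both arguments use identical ingredients at the bottom: the backward orbit $x_{m+1}=\sqrt{x_m-\tfrac14}$ of a point $x_0\geq 2$, Lemma \ref{lem:recurrence} giving $\psi_m'(x_m)=\prod_{\ell=1}^m 2x_\ell\geq \left(\tfrac{m+2}{2}\right)^2$, and supports contained in $(x_1,\psi(x_0))$ so that at $x_m$ exactly one term of the Neumann series survives. The paper, however, works with a \emph{bounded sequence} of test functions $(f_n)_n$ whose Taylor data at $x_0$ are Kronecker deltas, $f_n^{(j)}(x_0)=\delta_{n,j}$; this collapses the Fa\`a di Bruno expansion to the exact identity $(R_\mu f_n)^{(n)}(x_n)=\mu^{-(n+1)}\left(\psi_n'(x_n)\right)^n$, and the contradiction is extracted functional-analytically: if $R_\mu({\mathcal S}_\omega({\mathbb R}))\subset\Sigma_d({\mathbb R})$, the closed graph theorem makes $R_\mu$ continuous, hence $(R_\mu f_n)_n$ is bounded in the target space, which the diagonal evaluation at $(n,x_n)$ violates. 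You instead build a \emph{single} witness $f$ (Borel map plus cutoff) with nonnegative Taylor data at $2$, exploit that $\psi_m$ has nonnegative coefficients so every Bell-polynomial term is nonnegative, keep only the top term as a lower bound, and then optimize over $m$ for each fixed $n$ to get $\sup_m |\mu|^{-m}\psi_m'(x_m)^n\geq C^n n!^{\,2}$. Your route buys constructiveness (the function $f$ is exhibited rather than obtained by contradiction), dispenses with the closed graph theorem and the bounded-sequence machinery, and makes transparent why the argument is uniform in $\arg\mu$ (a single Neumann term survives, and the Fa\`a di Bruno sum is a sum of nonnegative reals). The paper's route buys exactness — no sign considerations, no inequality inside Fa\`a di Bruno, no two-parameter optimization, since the delta data and the diagonal choice $m=n$ do all the work — at the cost of being non-constructive. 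Your threshold bookkeeping in (2) (data $n!^{\,d}n^{-\varepsilon n}$ with $0<\varepsilon<d+2-d'$, versus the paper's $\exp\left(\lambda_n\varphi_\omega^\ast(n/\lambda_n)\right)$ with $\lambda_n=\log n$) is equivalent in effect: both lose a factor that is $o(n\log n)$ in the exponent, which is what makes the full range $d'<d+2$ accessible.
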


\begin{proof}(1) Take $x_0 \geq 2$ and define $(x_n)_n$, $x_{n}\geq 0,$ by the recurrence rule $x_{n+1}^2+\frac{1}{4}=x_n.$ Then $(x_n)_n$ is a decreasing sequence converging to $\frac{1}{2}.$

For each $n$ let $a_n$ be the scalar sequence (indexed at ${\mathbb N}_0$) $a_n=(\delta_{n,j})_j.$ As the Borel map $B:{\mathcal S}_\omega({\mathbb R}) \to {\mathcal E}_\omega({\mathbb R})$ is surjective and   $(a_n)_n$ is a bounded sequence in the Fréchet nuclear space ${\mathcal E}_{(\omega)}(\{0\})$ (see \cite[Lemma 3.4]{jmaa}), we may find a bounded sequence $(f_n)_n$ in ${\mathcal S}_\omega({\mathbb R})$ such that $(f_n^{(j)}(x_0))_j=a_n.$  Without loss of generality we may assume that all functions $f_n$ have compact support contained in $(x_1, \psi(x_0)),$ which implies that $f_n$ and all their derivatives vanish at $x_j$  as well as at $\psi_j(x_0)$ for $j\geq 1.$

Assume that $R_\mu ({\mathcal S}_{\omega}({\mathbb R}))\subset \Sigma_d({\mathbb R}).$ By the closed graph theorem, the map $R_\mu:{\mathcal S}_{\omega}({\mathbb R})\to \Sigma_d({\mathbb R})$ is continuous,
 thus $(R_\mu(f_n))_n$ is a bounded sequence in $\Sigma_d({\mathbb R}).$ In particular there is $C>0$ such that $$ |(R_\mu f_n)^{(n)}(x_n)|\leq C n!^d\mbox{ for each } n\geq 1.$$ Observe that $\displaystyle (R_\mu f_n)^{(n)}(x_n) = \sum_{j=0}^\infty \frac{1}{\mu^{j+1}}(f_{n} \circ \psi_j)^{(n)}(x_n).$ We denote $y_k:= 2x_k,$ so that $y_{n+1}^2 + 1 = 2y_n.$ Since $\psi_j(x_n)=x_{n-j}$ for $j\leq n,$ using Fa\`a di Bruno's formula and identity $(9)$ in \cite{advances}, we conclude that
$$\begin{array}{*2{>{\displaystyle}l}}

(R_\mu f_n)^{(n)}(x_n) & = \frac{1}{\mu^{n+1}}f_n^{(n)}(x_0)\left(\psi_n'(x_{n})\right)^n  = \frac{1}{\mu^{n+1}}\left(\prod_{j=0}^{n-1}2\psi_j(x_n)\right)^n\\ & \\ & = \frac{1}{\mu^{n+1}}\left(\prod_{k=1}^{n}y_k\right)^n\geq \left(\frac{(n+2)^2}{4\mu}\right)^n\frac{1}{\mu}\end{array}$$ by Lemma \ref{lem:recurrence}. This is a contradiction.

(2) We put $\omega(t) = t^{\frac{1}{d}}$ and $\sigma(t) = t^{\frac{1}{d'}}.$ We take $(x_n)_n$ and $(a_n)_n$ as in (1) and consider $b_n:= \exp\left(\lambda_n\varphi_\omega^\ast(\frac{n}{\lambda_n})\right)a_n,$ where $\lambda_n = \log n.$ Then $(b_n)_n$ is a bounded sequence in ${\mathcal E}_{(\omega)}(\{0\})$ and, again by the surjectivity of the Borel map,  we may find a bounded sequence $(f_n)_n$ in ${\mathcal S}_\omega({\mathbb R})$ such that $(f_n^{(j)}(x_0))_j = b_n.$  Without loss of generality we may assume that all functions $f_n$ have compact support contained in $(x_1, \psi(x_0)),$ which implies that $f_n$ and all their derivatives vanish at $x_j$ as well as at $\psi_j(x_0)$ for $j\geq 1.$

As before, the closed graph theorem implies that $R_\mu:{\mathcal S}_{\omega}({\mathbb R})\to {\mathcal S}_{\sigma}({\mathbb R})$ is continuous provided that $R_\mu ({\mathcal S}_{\omega}({\mathbb R}))\subset {\mathcal S}_{\sigma}({\mathbb R}).$
 Therefore $(R_\mu(f_n))_n$ is a bounded sequence in ${\mathcal S}_{\sigma}({\mathbb R}).$ In particular, there is $C > 0$ such that
$$
|(R_\mu f_n)^{(n)}(x_n)|\leq C \exp\left(\varphi_\sigma^\ast(n)\right)\ \ \forall\ n\in {\mathbb N}_0.$$ On the other hand, proceeding as in the proof of (1) we obtain
$$\left(R_\mu f_n\right)^{(n)}(x_n)\geq \frac{(n+2)^{2n}}{(4\mu)^{n+1}}\exp\left(\lambda_n\varphi_{\omega}^\ast(\frac{n}{\lambda_n})\right).$$ Using that $\varphi_\omega^\ast(x) = xd\log\left(\frac{xd}{e}\right)$ we finally obtain	
$$
\left(\frac{nd}{\lambda_n e}\right)^{nd}\frac{(n+2)^{2n}}{(4\mu)^{n+1}} \leq C \left(\frac{nd'}{e}\right)^{nd'}$$ for all $n\in {\mathbb N}_0,$ which is a contradiction.
\end{proof}

   \begin{remark}{\rm The same argument of the proof of Theorem \ref{th:neumann-gevrey-secondorder}(1) gives that $R_\mu({\mathcal S}_{\omega}({\mathbb R}))\not\subset \Sigma_2({\mathbb R})$ whenever $1 < |\mu| < \frac{e^2}{4}.$}
\end{remark}

\begin{corollary} Let $\omega(t)=\max\{0,\log ^p(t)\}$ with $p>1.$ Given a polynomial $\psi$ of degree greater than one, the following statements hold:\begin{enumerate}
\item $\sigma_{{\mathcal S}_{\omega}({\mathbb R})}(C_\psi)=\{0\}$ whenever $\psi$  lacks fixed points.
\item  $\overline{{\mathbb D}}\setminus \{0\} \subset \sigma_{{\mathcal S}_{\omega}({\mathbb R})}(C_\psi)$ provided that $\psi$ has  fixed points.
\item $\sigma_{{\mathcal S}_{\omega}({\mathbb R})}(C_\psi)={\mathbb C}\setminus \{0\}$ if $\psi$ is of second degree and has two different fixed points.
\end{enumerate}

\end{corollary}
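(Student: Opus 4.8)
The plan is to read all three assertions off the results already established, the one genuinely new observation being that for the logarithmic weight $\omega(t)=\max\{0,\log^p t\}$ the auxiliary weight $\sigma(t)=\omega(t^{1/a})$ satisfies $\sigma=a^{-p}\omega$, so that ${\mathcal S}_\sigma({\mathbb R})={\mathcal S}_\omega({\mathbb R})$ with equivalent topologies; moreover such an $\omega$ is (equivalent to) a subadditive weight, so every earlier theorem applies. Consequently $C_\psi$ maps ${\mathcal S}_\omega({\mathbb R})$ continuously into itself, i.e.\ $C_\psi\in{\mathcal L}({\mathcal S}_\omega({\mathbb R}))$ is everywhere defined, so $D(C_\psi)={\mathcal S}_\omega({\mathbb R})$ and $\sigma_{{\mathcal S}_\omega}(C_\psi)$ is the usual spectrum: $\lambda$ belongs to it exactly when $\lambda-C_\psi$ fails to be a topological isomorphism of ${\mathcal S}_\omega({\mathbb R})$.

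For (1), $\psi$ lacks fixed points, hence (being of degree $>1$) it has even degree, so $\psi({\mathbb R})$ is a proper half-line. Since $\omega$ is non-quasianalytic there is a nonzero $f\in{\mathcal D}_{(\omega)}({\mathbb R})\subset{\mathcal S}_\omega({\mathbb R})$ supported in the complement of $\psi({\mathbb R})$; then $C_\psi f=f\circ\psi=0$, so $0-C_\psi$ is not injective and $0\in\sigma_{{\mathcal S}_\omega}(C_\psi)$. For $\mu\neq0$, Proposition~\ref{prop:resolvente-continua} (with any $a>2$) gives $R_\mu=(\mu-C_\psi)^{-1}\in{\mathcal L}({\mathcal S}_\omega({\mathbb R}),{\mathcal S}_\sigma({\mathbb R}))={\mathcal L}({\mathcal S}_\omega({\mathbb R}))$; as $R_\mu$ is the two-sided inverse of $\mu-C_\psi$ already on ${\mathcal S}({\mathbb R})$ and both operators preserve ${\mathcal S}_\omega({\mathbb R})$, the operator $\mu-C_\psi$ is a topological isomorphism of ${\mathcal S}_\omega({\mathbb R})$, i.e.\ $\mu\in\rho(C_\psi)$. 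Hence $\sigma_{{\mathcal S}_\omega}(C_\psi)=\{0\}$.

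For (2) only the inclusion is needed: given $0<|\mu|\le1$, the Proposition on polynomials with fixed points preceding Lemma~\ref{lem:recurrence} produces $f\in{\mathcal S}_\omega({\mathbb R})$ for which $\mu g-C_\psi g=f$ has no solution $g\in{\mathcal S}({\mathbb R})$, a fortiori none in ${\mathcal S}_\omega({\mathbb R})\subset{\mathcal S}({\mathbb R})$; thus $\mu-C_\psi$ is not onto ${\mathcal S}_\omega({\mathbb R})$ and $\mu\in\sigma_{{\mathcal S}_\omega}(C_\psi)$, giving $\overline{\mathbb D}\setminus\{0\}\subset\sigma_{{\mathcal S}_\omega}(C_\psi)$. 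For (3) I would first pass to the normal form: a second-degree $\psi$ is linearly equivalent to $\phi(x)=x^2+c$, the affine conjugation $C_\phi=C_{\ell^{-1}}C_\psi C_\ell$ is implemented by the isomorphisms $C_\ell,C_{\ell^{-1}}$ of ${\mathcal S}_\omega({\mathbb R})$, so the two spectra coincide, and having two distinct fixed points is equivalent to $c<\tfrac14$. At the larger fixed point $a=\tfrac12\bigl(1+\sqrt{1-4c}\bigr)$ one checks $\phi'(a)=1+\sqrt{1-4c}>1$ and $\phi^{(n)}(a)\ge0$ for all $n\ge2$, so the strong form of that same Proposition applies and yields, for \emph{every} $\mu\neq0$, a datum $f\in{\mathcal S}_\omega({\mathbb R})$ with $\mu g-C_\psi g=f$ unsolvable in ${\mathcal S}({\mathbb R})$; hence $\mu\in\sigma_{{\mathcal S}_\omega}(C_\psi)$ for all $\mu\neq0$, that is ${\mathbb C}\setminus\{0\}\subset\sigma_{{\mathcal S}_\omega}(C_\psi)$.

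The only new inputs are thus bookkeeping: the identity ${\mathcal S}_\sigma={\mathcal S}_\omega$ for logarithmic weights, the non-quasianalytic construction of a kernel function, and the verification of the derivative hypotheses after passing to the monic normal form (this last check being where the sign $\phi''=2\ge0$ is used and is the reason one must reduce to $x^2+c$ rather than argue with a general downward parabola). I expect the main delicate point to be the treatment of $\lambda=0$: the non-injectivity argument of part (1) applies verbatim to $x^2+c$ and forces $0\in\sigma_{{\mathcal S}_\omega}(C_\psi)$ as well, so the substantive content of (3) is precisely the maximal non-surjectivity inclusion ${\mathbb C}\setminus\{0\}\subset\sigma_{{\mathcal S}_\omega}(C_\psi)$, which together with the trivial inclusion $\{0\}\subset\sigma_{{\mathcal S}_\omega}(C_\psi)$ describes the full spectrum.
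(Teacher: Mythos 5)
Your assembly is correct and is essentially the derivation the paper intends: the corollary is stated there without proof, as a direct consequence of the identity ${\mathcal S}_\sigma({\mathbb R})={\mathcal S}_\omega({\mathbb R})$ for logarithmic weights (noted just before Corollary \ref{cor:powers-logarithm}), of Proposition \ref{prop:resolvente-continua} together with the kernel argument for item (1), and of the unlabelled proposition on polynomials with fixed points for items (2) and (3); your reduction to the normal form $x^2+c$ with $c<\frac{1}{4}$ and the verification $\phi'(a)=1+\sqrt{1-4c}>1$, $\phi''\equiv 2\geq 0$ at the larger fixed point $a$, with the conjugation implemented by the isomorphisms $C_\ell$, $C_{\ell^{-1}}$ of ${\mathcal S}_\omega({\mathbb R})$, is exactly the intended use of that proposition.

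One point deserves to be promoted from a side remark to a conclusion. Your observation that the kernel argument also applies in case (3) is correct: the range of a quadratic polynomial is a proper half-line, $\omega$ is non-quasianalytic, so there exist nonzero functions in ${\mathcal D}_{(\omega)}({\mathbb R})\subset{\mathcal S}_\omega({\mathbb R})$ supported in the complement of $\psi({\mathbb R})$, and these lie in the kernel of $C_\psi$; hence $0\in\sigma_{{\mathcal S}_\omega({\mathbb R})}(C_\psi)$. Consequently what your argument actually proves in case (3) is $\sigma_{{\mathcal S}_\omega({\mathbb R})}(C_\psi)={\mathbb C}$, and the equality ``$={\mathbb C}\setminus\{0\}$'' in the statement cannot hold as printed; it must be read either as the inclusion ${\mathbb C}\setminus\{0\}\subset\sigma_{{\mathcal S}_\omega({\mathbb R})}(C_\psi)$ (in the same spirit as item (2)) or corrected to ``$={\mathbb C}$''. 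This is forced by consistency with item (1), whose equality $\sigma_{{\mathcal S}_\omega({\mathbb R})}(C_\psi)=\{0\}$ already requires $0$ to belong to the spectrum by precisely the same non-injectivity argument. So there is no gap on your side; you have reproduced the paper's implicit proof and, in addition, detected an inaccuracy in the formulation of item (3).
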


The case of polynomials of degree one is not considered here as, concerning the spectrum, they behave as in the Schwartz class  (see \cite[Proposition 2.1]{advances}).

\par\medskip\noindent
{\bf Acknowledgement.} The research was partially supported by project GV Prometeu/2021/070 and Grant PID2020-119457GB-100, funded by “ERDF A way of making Europe”
 and by MCIN/ AEI/10.13039/501100011033.

\end{document}